\documentclass[11pt]{article}

\usepackage[letterpaper,margin=1in]{geometry}

\newcommand{\triplenorm}[1]{
  \left\vert\kern-0.9pt\left\vert\kern-0.9pt\left\vert #1
  \right\vert\kern-0.9pt\right\vert\kern-0.9pt\right\vert}  
\newcommand{\Tscalar}[2]{\ensuremath{(#1 , #2)_{\bar R^{-1}}}} 
\newcommand{\Tnorm}[1]{\ensuremath{\|#1\|_{\bar R^{-1}}}}
\newcommand{\Dscalar}[2]{\ensuremath{( #1 , #2 )_D}}
\newcommand{\Dnorm}[1]{\|#1\|_{D}}

\newcommand{\Dproj}{\ensuremath{Q_D}}

\newcommand{\eqqsim}{\mathbin{\rotatebox[origin=c]{180}{\ensuremath{\cong}}}}

\newcommand{\co}{\ensuremath{C_{o}}}

\usepackage{amsmath}
\usepackage{amsbsy}
\usepackage{amsfonts}
\usepackage{amssymb}
\usepackage{amscd}
\usepackage{mathrsfs}

\usepackage{bm}

\usepackage{algpseudocode}
\usepackage[Algorithm]{algorithm}

\algnewcommand{\IIf}[1]{\State\algorithmicif\ #1\ \algorithmicthen}
\algnewcommand{\EElse}{\unskip\ \algorithmicelse\ }
\algnewcommand{\EndIIf}{\unskip\ \algorithmicend\ \algorithmicif}
\algnewcommand{\FFor}[1]{\State\algorithmicfor\ #1\ }
\algnewcommand{\EndFFor}{\unskip\ \algorithmicend\ \algorithmicfor}

\usepackage{txfonts}

\usepackage[latin1]{inputenc}

\usepackage{graphicx}
\usepackage{subfigure} 
\usepackage{float}

\usepackage{tikz}   
\usetikzlibrary{shapes,arrows,decorations.pathmorphing,backgrounds,positioning,fit,matrix,calc}

\tikzstyle{decision} = [diamond, draw, fill=blue!20, 
    text width=4.5em, text badly centered, node distance=3cm, inner sep=0pt]
\tikzstyle{block} = [rectangle, draw, fill=blue!20, 
    text width=5em, text centered, rounded corners, minimum height=4em]
\tikzstyle{line} = [draw, -latex']
\tikzstyle{cloud} = [draw, ellipse,fill=red!20, node distance=3cm,
    minimum height=2em]

\usepackage{hyperref}
\usepackage{type1cm}       

\usepackage{soul}

\usepackage{url}
\usepackage{undertilde}

\usepackage{rotating} 
\usepackage{makeidx}
\makeindex             
\usepackage{multicol}   
\usepackage{enumerate}
\usepackage{xspace}
 \usepackage{comment}  
\usepackage{etoolbox}

\usepackage{fancyhdr}
\pagestyle{fancy}
\rhead{}

\pagenumbering{arabic}

\ifcsmacro{theorem}{}{
\newtheorem{theorem}{Theorem}[section]
}
\ifcsmacro{lemma}{}{
\newtheorem{lemma}[theorem]{Lemma}
}
\ifcsmacro{corollary}{}{
\newtheorem{corollary}[theorem]{Corollary}
}
\ifcsmacro{proposition}{}{

}
\ifcsmacro{algorithm}{}{
\newtheorem{algorithm}[equation]{Algorithm}
}

\newtheorem{assumption}[equation]{Assumption}

\let\oldchapter\chapter
\def\chapter{
  \setcounter{exercise}{0}
  \oldchapter
}

\newcommand{\beas}{\begin{eqnarray*}}
\newcommand{\eeas}{\end{eqnarray*}}
\newcommand{\bary}{\begin{array}}
\newcommand{\eary}{\end{array}}

\def\ec{\mathrel{\hbox{$\copy\Ea\kern-\wd\Ea\raise-3.5pt\hbox{$\sim$}$}}}
\newcommand{\lc}{\mathrel{\raise2pt\hbox{${\mathop<\limits_{\raise1pt\hbox{\mbox{$\sim$}}}}$}}}
\newcommand{\gc}{\mathrel{\raise2pt\hbox{${\mathop>\limits_{\raise1pt\hbox{\mbox{$\sim$}}}}$}}}

\newbox\Ea
\setbox\Ea=\hbox{\raise0.9pt\hbox{$=$}}

\newcommand{\bproof}{\begin{proof}}
\newcommand{\eproof}{\end{proof}}

\ifcsmacro{R}{}{
 
}

\newcommand{\diam}{\mbox{\rm diam\,}}

\newtheorem{remark}[theorem]{Remark}
\newtheorem{definition}[theorem]{Definition}

\numberwithin{equation}{section}

\begin{document}

\title{A unified approach to the design and analysis of AMG}
\author{Jinchao Xu, Hongxuan Zhang and Ludmil Zikatanov}

\maketitle

\begin{abstract}
  In this work, we present a general framework for the design and
  analysis of two-level AMG methods. The approach is to find a basis
  for locally optimal or quasi-optimal coarse space, such as the space
  of constant vectors for standard discretizations of scalar elliptic
  partial differential equations. The locally defined basis elements
  are glued together using carefully designed linear extension maps to
  form a global coarse space. Such coarse spaces, constructed locally,
  satisfy global approximation property and by estimating the local
  Poincar{\' e} constants, we obtain sharp bounds on the convergence
  rate of the resulting two-level methods. To illustrate the use of
  the theoretical framework in practice, we prove the uniform
  convergence of the classical two level AMG method for finite element
  discretization of a jump coefficient problem on a shape regular
  mesh.
\end{abstract}

\section{Introduction}

Multigrid methods are among the most efficient numerical methods for
solving large scale linear systems of equations arising from the
discretization of partial differential equations.  This type of
methods can be viewed as an acceleration of traditional iterative
methods based on local relaxation such as Gauss-Seidel and Jacobi
methods.  The main idea behind multigrid methods is to project the
error obtained after applying a few iterations of local relaxation
methods onto a coarser grid. Part of the slow-to-converge (the low
frequency) error on a finer grid is a relatively high frequency on the
coarser grid and such high frequencies can be further corrected by a
local relaxation method on the coarser grid.  By recursively repeating
such a procedure a multilevel iterative process is obtained.  A
classical example of a multilevel algorithm is known as Geometric
Multi-Grid (GMG) method, which converges uniformly with nearly
optimal complexity for a large class of problem, especially elliptic
boundary problems of 2nd and 4th order as demonstrated
in~\cite{1975NicolaidesR-aa,1977NicolaidesR-aa,1980BankR_DupontT-aa,1983BraessD_HackbuschW-aa,bramble1987new,bramble1990parallel,1991BrambleJ_PasciakJ_XuJ-aa,1991BrambleJ_PasciakJ_WangJ_XuJ-ac,1992XuJ-aa}.

Despite of their efficiency, however, the GMG methods have their
limitations. They depend on a hierarchy of geometric grids which is
often not readily available.  The Algebraic MultiGrid (AMG) methods
were designed in an attempt to address such limitations. They were
proposed as means to generalize geometric multigrid methods for
systems of equations that share properties with discretized PDEs, such
as the Laplace equation, but use unstructured grids in
the underlying discretization.  The first AMG algorithm
in~\cite{1stAMG} was a method developed
under the assumption that such a problem was being solved.  Later, the
AMG algorithm was generalized using many heuristic to extend its
applicability to more general problems and matrices.  As a result, a
variety of AMG methods have been developed in the last three decades
and they have been applied to many practical problems with
success. But, unfortunately, a good theoretical understanding of why
and how these methods work is still seriously lacking.

One of the first results on two level convergence of AMG methods are
found in earlier papers~\cite{1stAMG,Ruge.J;Stuben.K.1987a}.  There
have been a lot of research on reflecting the MG theory through algebraic settings:
\cite{1983MaitreJ_MusyF-aa,1985BankR_DouglasC-aa,Mandel.J.1988a}; algebraic
variational approach to the two level MG
theory~\cite{1985McCormickS-aa,1984McCormickS-aa,1982McCormickS_RugeJ-aa}. 
For the two grid convergence, sharper results, including two sided
bounds are given in~ \cite{2008ZikatanovL-aa} and also considered in
\cite{Falgout.R;Vassilevski.P.2004a}
and~\cite{Falgout.R;Vasilevski.P;Zikatanov.L.2005a}. These two-level
results are more or less a direct consequences of the abstract theory
provided
in~\cite{1991BrambleJ_PasciakJ_WangJ_XuJ-ac,1992XuJ-aa,Xu.J;Zikatanov.L.2002a}. A
survey of these and other related results is found in a recent
article~\cite{2014MacLachlanS_OlsonL-aa}.

Multilevel results are difficult to establish in general algebraic
settings, and most of them are based on either not realistic
assumptions or they use geometrical grids to prove convergence. We
refer
to~\cite{1996VanekP_MandelJ_BrezinaM-aa,2011BrezinaM_VassilevskiP-aa}
for results in this direction. Rigorous multilevel results for finite element
equations can be derived using the auxiliary space framework, which is
developed in~\cite{1996XuJ-aa} for quasi-uniform meshes. More recently
multilevel convergence results for adaptively refined grids were shown
to be optimal in~\cite{2012ChenL_NochettoR_XuJ-aa}.  A multilevel
convergence result on shape regular grids using AMG based on quad-tree
(in 2D) and oct-tree (in 3D) coarsening is shown
in~\cite{2015GrasedyckL_WangL_XuJ-aa}.

In this paper, we focus on the design and analysis of the two level
AMG methods.  We develop a unified framework and theory that can be
used to derive and analyze different algebraic multigrid methods in a
coherent manner. We provide a general approach to the construction of
coarse space and we prove that under appropriate assumptions the
resulting two-level AMG method for the underlying linear system
converges uniformly with respect to the size of the problem, the
coefficient variation, and the anisotropy.  Our theory applies to most
existing multigrid methods, including the standard geometric multigrid
method~\cite{1992XuJ-aa,1978HackbuschW-aa}, the classic
AMG~\cite{1stAMG}, energy-minimization AMG~\cite{energymin},
unsmoothed and smoothed aggregation AMG~\cite{Vakhutinsky.I;Dudkin.L;Ryvkin.A.1979a, Mika.S;Vanek.P.1992a,Mika.S;Vanek.P.1992b,Napov.A;Notay.Y.2012a,Notay.Y.2012b}, and
spectral AMGe~\cite{Jones.J;Vassilevski.P.2001a,Brezina.M;Cleary.A;Falgout.R;Henson.V;Jones.J;Manteuffel.T;McCormick.S;Ruge.J.2001a,Chartier.T;Falgout.R;Henson.V;Jones.J;Manteuffel.T;McCormick.S;Ruge.J;Vassilevski.P.2003b}. As an application, we prove, using
our abstract framework, the uniform convergence of the standard two-level
classical AMG method for jump coefficient problem.

With very few exceptions, the AMG algorithms have been mostly
targeting the solution of symmetric positive definite (SPD) systems.
In this paper, we choose to present our studies for a slightly larger
class of problems, namely symmetric semi-positive definite (SSPD)
systems.  This approach is not only more inclusive, but more
importantly, the SSPD class of linear systems can be viewed as more
intrinsic to the AMG ideas.  For example, the design of AMG may be
better understood by using local problem (defined on subdomains) with
homogeneous Neumann boundary condition, which would amount to an SSPD
sub-systems.

In short, in this paper we consider AMG techniques for solving a linear
system of equations: 
\begin{equation}
  \label{Au=f}
	Au =f,
\end{equation}
where $A$ is a given SSPD operator or sparse matrix, and the problem
is posed in a vector space of a large dimension. Furthermore, we will 
show in \S\ref{sec:m-matrix} and \S\ref{sec:m-matrix-fem} that in most
cases, $A$ can be replaced by an $M$-matrix, which we call it \emph{$M$-matrix 
relative} of $A$.

\section{Model elliptic PDE operators}\label{sec:model}
We consider the following boundary value problems
\begin{equation}
  \label{Model0}
    {\mathcal L}u=-\nabla\cdot (\alpha(x)\nabla u)=f, \quad x\in \Omega
\end{equation}
where $\alpha: \Omega\mapsto \mathbb R^{d\times d}$ is an SPD matrix function
satisfying
\begin{equation}
  \label{alpha}
\alpha_0\|\xi\|^2\le 
\xi^T\alpha (x)\xi \le
\alpha_1\|\xi\|^2,\quad \xi\in \mathbb R^d.
\end{equation}
for some positive constants $\alpha_0$ and $\alpha_1$.  Here $d=1,2,3$
and $\Omega\subset\mathbb R^d$ is a bounded domain with boundary
$\Gamma=\partial \Omega$.

A variational formulation for \eqref{Model0} is as follows: Find
$u\in V$ such that
\begin{equation}
  \label{Vari}
a(u,v)=(f, v), \quad\forall v\in V.   
\end{equation}
Here
$$
a(u,v)=\int_\Omega (\alpha(x)\nabla u)\cdot \nabla v, \quad 
(f,v)= \int_\Omega fv.
$$ 
and $V$ is a Sobolev space that can be chosen to address different
boundary conditions accompanying the equation~\eqref{Model0}. 
One case is the mixed boundary conditions: 
\begin{equation}
  \label{MixedBoundary}
  \begin{array}{rcl}
u=&0, &x\in \Gamma_D,\\
(\alpha\nabla u)\cdot n=&0,&x \in\Gamma_N,
\end{array}
\end{equation}
where
$\Gamma=\Gamma_D\cup\Gamma_N$.  The pure Dirichlet problem is when
$\Gamma_D = \Gamma$, namely
\begin{equation}\label{dirichlet}
    u= 0, \quad x\in \Gamma,
\end{equation}
while the pure Neumann problem is when $\Gamma_N
=\Gamma$, namely
\begin{equation}\label{neumman}
    (\alpha\nabla u)\cdot n = 0 , \quad x\in \Gamma.
\end{equation}
We thus have $V$ as
\begin{equation}
  \label{3V}
V=
\left\{
  \begin{array}{l}
    H^1(\Omega) = \{v\in L^2(\Omega): \partial_iv\in    L^2(\Omega), i=1:d\};\\
    H^1_D(\Omega) = \{v\in H^1(\Omega): v|_{\Gamma_D}=0\}. 
  \end{array}
\right.  
\end{equation}
When we consider a pure Dirichlet problem, $\Gamma_D = \Gamma$, we
denote the space by $V=H^1_0(\Omega)$. In addition, for pure Neumann
boundary conditions, the following condition is  added to
assure the existence of the solution to~\eqref{Vari}:
\begin{equation}
  \label{consistent-f}  \int_\Omega f =0. 
\end{equation}

One most commonly used model problem is when 
\begin{equation}
  \label{iso}
\alpha(x)=1, \quad x\in \Omega,
\end{equation}
which corresponds to the Poisson equation
\begin{equation}
  \label{Poisson}
-\Delta u=f.  
\end{equation}
This simple problem provides a good representative model for isotropic problems.

In this paper, we focus on the special case 
when $\alpha$ is a scalar and it has discontinuous jumps such as
\begin{equation}\label{coeff12}
\alpha(x) = \begin{cases}
\epsilon , \quad x\in \Omega_1,\\
1 , \quad x\in \Omega_2.
\end{cases}
\end{equation}
The interesting jump coefficient case is when $\epsilon$ is sufficiently small, and we make such an assumption to investigate the robustness of algorithms with respect to
the PDE coefficient variation. 

We now give an example of finite element discretization. 
Given a triangulation ${\mathcal T}_h$ for $\Omega$, let $V_h\subset V$ be a finite
element space consisting of piecewise linear (or higher order)
polynomials with respect to the triangulation ${\mathcal T}_h$.  The
finite element approximation of the variational problem \eqref{Vari}
is: Find $u_h\in V_h$ such that
\begin{equation}
  \label{vph}
a(u_h,  v_h)=(f,v_h), \quad\forall\,v_h\in V_h.
\end{equation}
Assume $\{\phi_i\}_{i=1}^{N}$ is the nodal basis of $V_h$,  namely,
$\phi_i(x_j)=\delta_{ij}$ for any nodes $x_j$.
We write
\(
        u_h(x)=\sum_{j=1}^{N}\mu_j\phi_j(x)
\)
the equation \eqref{vph}  is then equivalent to
\[
        \sum_{j=1}^{N}\mu_ja(\phi_j,\phi_i)=(f,\phi_i),\quad
        j=1,2,\cdots, N, 
\]
which is a linear system of equations:
\begin{equation}\label{axb}
        A\mu=b, \quad (A)_{ij} = a(\phi_j,\phi_i), \quad 
 \mbox{and}\quad (b)_i=(f,\phi_i).
\end{equation}
Here, the matrix $A$ is known as the stiffness matrix of the  nodal basis
$\{\phi_i  \}_{i=1}^N$.

For any $T\in \mathcal T_h$, we define
\begin{equation}\label{hT}
    \overline h_T=\diam(T),\quad h_T=|T|^{\frac{1}{d}}, \quad \underline h_T=2\sup\{r>0: B(x, r)\subset T \text{ for } x\in T\}.
\end{equation}
We say that the mesh $\mathcal T_h$ is \emph{shape regular} if there exists a uniformly bounded constant $\sigma \ge 1$ such that     
\begin{equation}
    \underline h_T\le h_T \le \overline h_T\le \sigma \underline h_T, \quad \forall T\in \mathcal T_h.
\end{equation}
And we call  $\sigma$ the \emph{shape regularity constant}. 

In the following we assume that the finite element mesh is shape regular.

\section{An abstract two-level method}

Given a finite dimensional vector space $V$ equipped with an inner product
$(\cdot,\cdot)$, we consider
\begin{equation}
\label{Auf}
Au=f,   
\end{equation}
where $ A: V\mapsto V' $ is symmetric positive definite (SPD)
and $V'$ is the dual of $V$. 

A two-level method for solving \eqref{Auf} typically consists of the following components:
\begin{enumerate}[1.]
\item A smoother $R: V'\mapsto V$;
\item A coarse space $V_c\subset V$ linked with $V$ via a prolongation operator:
$$
P:  V_c\mapsto V. 
$$
\item A coarse space solver $B_c: V_c'\mapsto V_c$.
\end{enumerate}

We always assume that $\bar R$ is SPD and hence the smoother $R$ is always convergent. 
Furthermore, 
\begin{equation}
    \|v\|_A^2\le \|v\|_{\bar R^{-1}}^2.
\end{equation}
In the discussion below we need the following inner product
\begin{equation}\label{eq:norm-star}
\Tscalar{u}{v} = (\overline{T}^{-1} u,v)_A=( \bar R^{-1}u,v), \quad \overline{T} = \overline{R} A,
\end{equation}
and the accompanying norm $\Tnorm{\cdot}$. 

The restriction of \eqref{Auf} is then
\begin{equation}
  \label{coarse:Au=f}
A_cu_c=f_c,   
\end{equation}
where 
$$
A_c=P'AP, \quad f_c=P'f. 
$$

In an axact two-level method, the coarse space solver $B_c$ is chosen to be the exact solver,
namely $B_c=A_c^{-1}$. In the case that $A$ is semi-definite, we use $N(A)$ to denote the kernel of $A$ and we always assume that $N(A)\subset V_c$. When $N(A)\neq \{0\}$ 
with a slight abuse of notation, we will still use $A_c^{-1}$ to denote the psudo-inverse of
$A_c$, and in such case we have
$$
A_c^{-1}=A_c^\dag
$$
We will use similar notation for psudo-inverse of other relevant singular operators and
matrices in the rest of the paper. 

A typical AMG algorithm is defined in terms of an operator
$B: V'\mapsto V$, which is an approximate inverse (a preconditioner)
of $A$.  The two level MG method is as follows.
\begin{algorithm}[H]
\caption{A two level MG method}\label{alg:two-level}
Given $g\in V'$ the action  $Bg$  is defined via the following three steps
\begin{enumerate}
\item Coarse grid correction: $w=P B_c P' g$.
\item Post-smoothing: $Bg:= w +R(g-Aw)$.
\end{enumerate}
\end{algorithm}
    
The error propagation operator for two-level AMG operator $E=I-BA$ is
    \begin{equation}\label{E_op}
        E=(I-RA)(I-\Pi_c), 
    \end{equation}
    where $\Pi_c= PA_c^{-1}P^TA$, which is the $(\cdot,\cdot)_A$ orthogonal projection on $V_c$.

    The following convergence result is shown in~\cite{AMGReview} for
    semi-definite operators $A$ and is an improvement of the
    well-known two level convergence estimates considered
    in~\cite{Xu.J;Zikatanov.L.2002a,Falgout.R;Vasilevski.P;Zikatanov.L.2005a}.

\begin{theorem}\label{thm:two-level-convergence}  
Assume that $N\subset V_c$. The convergence rate of an exact two level
AMG is given by 
\begin{equation}\label{eq:two-level-convergence}
\|E\|_A^2 = 1- \frac1{K(V_c)}, 
\end{equation}
where
\begin{equation}
  \label{KVc}
K(V_c)=\max_{v\in V}\min_{v_c\in V_c}\frac{\Tnorm{v-v_c}^2}{\|v\|^2_A}.
\end{equation}
\end{theorem}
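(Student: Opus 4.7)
The plan is to reduce the computation of $\|E\|_A^2$ to the smallest eigenvalue of the $A$-self-adjoint operator $\tilde T := (I-\Pi_c)\bar T\big|_{V_c^{\perp_A}}$ with $\bar T = \bar R A$, and to identify that eigenvalue with $1/K(V_c)$ via two matching inequalities. Since $\Pi_c$ is the $A$-orthogonal projection onto $V_c$, the operator $E = (I-RA)(I-\Pi_c)$ vanishes on $V_c$, so
\[
\|E\|_A^2 = \sup_{w \in V_c^{\perp_A}\setminus\{0\}} \frac{\|(I-RA)w\|_A^2}{\|w\|_A^2}.
\]
A direct expansion using $\bar R = R + R^T - R^T A R$ yields $\|(I-RA)w\|_A^2 = \|w\|_A^2 - (\bar R A w, A w)$, so the theorem reduces to showing
\[
\inf_{w \in V_c^{\perp_A}\setminus\{0\}} \frac{(\bar R A w, A w)}{\|w\|_A^2} = \frac{1}{K(V_c)}.
\]
When $A$ is only SSPD, the assumption $N(A)\subset V_c$ makes this quotient well-defined on $V/N(A)$.

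For the inequality $\ge$, I use the identity $(u,v)_A = \Tscalar{\bar T u}{v}$, which is immediate from $\bar R^{-1}\bar T = A$. For $w \in V_c^{\perp_A}$ and any $v_c \in V_c$, the $A$-orthogonality of $w$ to $V_c$ gives
\[
\|w\|_A^2 = (w, w - v_c)_A = \Tscalar{\bar T w}{w - v_c},
\]
and Cauchy-Schwarz in the $\Tscalar{\cdot}{\cdot}$ inner product, together with $\Tnorm{\bar T w}^2 = (\bar R A w, A w)$, yields $\|w\|_A^4 \le (\bar R A w, A w)\,\Tnorm{w - v_c}^2$. Taking the infimum over $v_c \in V_c$ and invoking the definition of $K(V_c)$ at $v = w$ completes this direction.

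For the matching inequality I identify an extremizer. Because $(I-\Tproj)$ annihilates $V_c$, the numerator of the $K(V_c)$ quotient depends on $v$ only through $(I-\Pi_c)v$; the denominator $\|v\|_A^2$ is then minimized on the coset $\{v : (I-\Pi_c)v = w\}$ by $v = w \in V_c^{\perp_A}$, so the maximum is attained at some $v^* \in V_c^{\perp_A}$. The first-order optimality condition for $\Tnorm{(I-\Tproj)v}^2/\|v\|_A^2$ reads $\bar R^{-1}(I-\Tproj)v^* = K(V_c)\,A v^*$, equivalently $(I-\Tproj)v^* = K(V_c)\,\bar T v^*$. Applying $I - \Pi_c$, and using that $\Tproj v^* \in V_c$ forces $(I-\Pi_c)\Tproj v^* = 0$ while $v^* \in V_c^{\perp_A}$ gives $(I-\Pi_c)v^* = v^*$, I conclude $\tilde T v^* = (1/K(V_c))\,v^*$. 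Hence $1/K(V_c)$ is an eigenvalue of $\tilde T$, so the infimum above is $\le 1/K(V_c)$, matching the upper bound and establishing~\eqref{eq:two-level-convergence}.

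I expect the reverse direction to be the main obstacle: Cauchy-Schwarz is pointwise, so the real work is to arrange tightness at some $w$. The crucial observation is that the Lagrange equation for $K(V_c)$ collapses to an eigenvalue equation for $\tilde T$ precisely because the two projections at play---$\Tproj$ (orthogonal in $\Tscalar{\cdot}{\cdot}$) and $\Pi_c$ (orthogonal in $A$)---share the range $V_c$, so that $(I-\Pi_c)\Tproj = 0$.
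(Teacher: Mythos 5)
Your proof is correct, but there is no internal proof to compare it against: the paper does not prove Theorem~\ref{thm:two-level-convergence}, it quotes the identity from the cited reference \cite{AMGReview} (and the earlier XZ-identity literature). Your argument is essentially the standard proof of that identity: reduce $\|E\|_A^2$ to $1-\inf_{w\in V_c^{\perp_A}}(\bar RAw,Aw)/\|w\|_A^2$ via $\|(I-RA)w\|_A^2=\|w\|_A^2-(\bar RAw,Aw)$, get the bound $\geq 1/K(V_c)$ from $\|w\|_A^2=\Tscalar{\bar Tw}{w-v_c}$ and Cauchy--Schwarz in the $\bar R^{-1}$-inner product, and get sharpness by locating a maximizer $v^\ast$ of the $K(V_c)$ quotient in $V_c^{\perp_A}$ and reading off the Euler--Lagrange equation $\bar R^{-1}(I-\Tproj)v^\ast=K(V_c)Av^\ast$, so the Rayleigh quotient of $\tilde T$ at $v^\ast$ equals $1/K(V_c)$. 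Two points should be made explicit in a final write-up. First, the expansion of $\|(I-RA)w\|_A^2$ presupposes the symmetrization $\bar R=R+R^T-R^TAR$, i.e.\ $I-\bar RA=(I-R^TA)(I-RA)$; the paper never defines $\bar R$, so state this. Second, in the SSPD case $A_c^{-1}$ is a pseudo-inverse, so $\Pi_c$ restricted to $V_c$ is the identity only modulo $N(A)\cap V_c$; hence $(I-\Pi_c)\Tproj v^\ast=0$ and your eigen-equation $\tilde Tv^\ast=K(V_c)^{-1}v^\ast$ hold only up to a component in $N(A)$. This is harmless---since $N(A)\subset V_c$ that component is $A$-orthogonal to $v^\ast$, so the quotient $(\bar RAv^\ast,Av^\ast)/\|v^\ast\|_A^2$ still equals $1/K(V_c)$---but it should be said, together with the observation that the maximizer exists and satisfies $\|v^\ast\|_A\neq 0$ because the quotient is posed on $V/N(A)$, exactly as the assumption $N(A)\subset V_c$ is designed to ensure.
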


For a given smoother $R$, one basic strategy in the design of AMG is
to find a coarse space such that $K(V_c)$ is made as practically small
as possible.  There are many cases, however, in which the operator
$\bar{R}^{-1}$ in the definition of $K(V_c)$ is difficult to work
with.  It is then convenient to replace $\bar{R}^{-1}$ by a
simpler and spectrally equivalent SPD operator.  More specifically, we
assume that $D: V\mapsto V'$ is an SPD operator such that
\begin{equation}\label{star-equiv-norms} c_D \Dnorm{v}^2\le
\Tnorm{v}^2 \le c^D\Dnorm{v}^2, \quad \forall v\in V,
\end{equation} where
\[ \Dscalar{u}{v}=( Du,v), \quad \Dnorm{v}^2 =
\Dscalar{v}{v}.
\]

As a rule, the norm defined by $\bar{R}$ corresponding to the
symmetric Gauss-Seidel method, i.e. $R$ defined by pointwise
Gauss-Seidel method can be replaced by the norm defined by the
diagonal of $A$ (i.e. by Jacobi method, which, while not always
convergent as a relaxation provides an equivalent norm).  For additional
details on this equivalence, we refer to~\cite{2008ZikatanovL-aa}.

Now, in terms of this operator $D$, we introduce the following quantity
\begin{equation}
  \label{KVcD} K(V_c,D)= \max_{v}\frac{\Dnorm{v-\Dproj
v}^2}{\|v\|_A^2} =\max_{v}\min_{v_c\in
V_c}\frac{\Dnorm{v-v_c}^2}{\|v\|_A^2},
\end{equation} 
where $\Dproj: V\mapsto V_c$ is the $\Dscalar{u}{v}$-orthogonal projection.
By \eqref{KVc}, \eqref{KVcD} and \eqref{star-equiv-norms}, we have
\begin{equation}
  \label{KK} c_D K(V_c,D)\le K(V_c)\le c^D K(V_c,D).
\end{equation}
The following theorem presents the two sided bounds on the convergence rate of the two level methods depending on the constants involved in~\eqref{star-equiv-norms}.
\begin{theorem}\label{thm:two-level-theorem-period} The two level
algorithm satisfies
\begin{equation}\label{eq:y-two-level-estimate}
1-\frac{1}{c_DK(V_c,D)}\le \|E\|_A^2 \le 1-\frac{1}{c^DK(V_c,D)} \le
1-\frac{1}{c^DC}.
\end{equation} where $C$ is any upper bound of $K(V_c,D)$, namely
\begin{equation}\label{eq:approximation-z} 
    \min_{w\in V_c}\Dnorm{v-w}^2 \le C\|v\|_A^2, \quad\mbox{for all}\quad v\in V.
\end{equation}
\end{theorem}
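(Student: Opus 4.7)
The plan is to derive this theorem as a direct consequence of Theorem~\ref{thm:two-level-convergence} combined with the norm equivalence~\eqref{star-equiv-norms}, so there is not much new content beyond a careful bookkeeping of the inequalities. The identity $\|E\|_A^2 = 1 - 1/K(V_c)$ is already available, so the task reduces to bounding $K(V_c)$ from above and below by a multiple of $K(V_c,D)$, and then applying the monotonicity of $x \mapsto 1 - 1/x$ on the positive axis.

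First, I would recall that $K(V_c)$ and $K(V_c,D)$ differ only through the norm used in the numerator of~\eqref{KVc} and~\eqref{KVcD}. Since the $\Dscalar{\cdot}{\cdot}$-projection $Q_D$ realizes the minimum of $\Dnorm{v-w}^2$ over $w\in V_c$, and similarly any minimization against an equivalent inner product preserves the best-approximation structure, the inequalities~\eqref{star-equiv-norms} pass to the minima to yield
\begin{equation*}
    c_D \min_{v_c \in V_c} \Dnorm{v - v_c}^2 \le \min_{v_c \in V_c} \Tnorm{v - v_c}^2 \le c^D \min_{v_c \in V_c} \Dnorm{v - v_c}^2.
\end{equation*}
Dividing by $\|v\|_A^2$ and taking the maximum over $v \in V$ gives exactly~\eqref{KK}, namely $c_D K(V_c,D) \le K(V_c) \le c^D K(V_c,D)$.

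Next, I would insert this chain into Theorem~\ref{thm:two-level-convergence}. Since $K(V_c) > 0$ and $t \mapsto 1 - 1/t$ is monotonically increasing, the lower and upper bounds on $K(V_c)$ translate into
\begin{equation*}
    1 - \frac{1}{c_D K(V_c,D)} \le 1 - \frac{1}{K(V_c)} = \|E\|_A^2 \le 1 - \frac{1}{c^D K(V_c,D)}.
\end{equation*}
Finally, if $C$ is any constant satisfying~\eqref{eq:approximation-z}, then dividing by $\|v\|_A^2$ and maximizing over $v$ shows $K(V_c,D) \le C$, whence $1/(c^D K(V_c,D)) \ge 1/(c^D C)$, giving the last inequality $\|E\|_A^2 \le 1 - 1/(c^D C)$.

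There is no genuine obstacle here; the only care needed is to confirm that~\eqref{eq:approximation-z} indeed furnishes an upper bound for $K(V_c,D)$ (which follows by taking the minimum over $w$ before maximizing over $v$) and to observe that the hypothesis $N(A) \subset V_c$ required by Theorem~\ref{thm:two-level-convergence} is implicitly in force throughout this subsection. The result is therefore essentially a translation of the sharp identity for $\|E\|_A^2$ from the $\bar R^{-1}$-inner product to the more convenient $D$-inner product, with the loss being exactly the equivalence constants $c_D$ and $c^D$.
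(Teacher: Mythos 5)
Your proof is correct and follows exactly the route the paper intends: the paper records the equivalence \eqref{KK} just before the theorem and then declares the proof ``straightforward,'' which is precisely your combination of \eqref{KK} with the sharp identity of Theorem~\ref{thm:two-level-convergence}, the monotonicity of $t\mapsto 1-1/t$, and the observation that \eqref{eq:approximation-z} gives $K(V_c,D)\le C$.
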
 The proof of the above theorem is straightforward and indicates that, if $c_D$ and $c^D$ are ``uniform'' constants,
the convergence rate of the two-level method is ``uniformly'' dictated
by the quantity $K(V_c,D)$.

\section{$M$-matrix relatives}\label{sec:m-matrix}

Our results on $M$-matrix relatives are related to the some of the
works on preconditioning by $Z$-matrices and
$L$-matrices~\cite{kraus2006algebraic,kraus2008algebraic}. They are
implicitly used in most of the AMG
literature~\cite{Ruge.J;Stuben.K.1987a} where the classical strength
of connection definition gives an $M$-matrix.

In this paper, a symmetric matrix $A \in \mathbb R^{n\times n}$ is called an \emph{$M$-matrix} if it satisfies
the following three properties:
\begin{align}
\label{eq:sign1} &a_{ii} > 0 \;\;\text{for}\;\; i = 1,...,n,\\
\label{eq:sign2} &a_{ij} \le 0 \;\;
                  \text{for}\;\; i \ne j, \;\;i, \;j = 1,...,n,\\
\label{eq:sign3} &A \;\;\text{is semi-definite}.
\end{align}
An important remark is in order: We have used the term $M$-matrix to
denote semidefinite matrices, and we are aware that this is not the
precise definition. It is however convenient to use reference to
$M$-matrices and we decided to relax a bit the definition here with
the hope that such an inaccuracy pays off by better appeal to the
reader.
 
As first step in creating space hierarchy the majority of the AMG
algorithms for $Au = f$ with positive semidefinite $A$ uses a simple
filtering of the entries of $A$ and construct an $M$-matrix which is
then used to define crucial AMG components. We next define such $M$-matrix relative. 
\begin{definition}[$M$-matrix relative] We call a matrix
  $A_M$ an \emph{$M$-matrix relative} of $A$ if
  $A_M$  is an M-matrix and satisfies the inequalities
\begin{equation}\label{MMrel-0}
(v,v)_{A_M} \lesssim (v,v)_{A}, \quad\mbox{and}\quad
(v,v)_{D}\lesssim (v,v)_{D_M}, \quad \mbox{for all} \quad v\in V,
\end{equation}
where $D_M$ and $D$ are the diagonals of $A_M$ and $A$ respectively. 
\end{definition}
We point out that the $M$-matrix relatives are instrumental in the
definition of coarse spaces and also in the convergence rate
estimates. This is clearly seen later in~\S\ref{sec:unifiedAMG} where
we present the unified two level theory for AMG.  Often, we have that
the one sided inequality in~\eqref{MMrel-0} is in fact a spectral
equivalence.

By definition, we have the following simple but important result. 
\begin{lemma}\label{lemma-equiv}
Let $A_M$ be an
  $M$-matrix relative of $A$ and let $D$ and $D_M$ be the diagonal matrices of
  $A$ and $A_M$, respectively. If $V_c\subset V$ is a subspace, then the estimate
  \begin{equation}
    \label{VcA}
\|u - u_c\|_D^2\lesssim \|u\|_A^2    
  \end{equation}
holds for some $u_c\in V_c$, if the estimate 
\begin{equation}
  \label{VcA+}
\| u - u_c \|_{D_M}^2\lesssim \|u\|_{A_M}^2  
\end{equation}
holds. 
\end{lemma}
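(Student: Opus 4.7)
The plan is to chain two inequalities drawn directly from the definition of an $M$-matrix relative, with the hypothesized bound sandwiched in the middle. No new structural ideas are needed; the lemma is essentially a bookkeeping consequence of \eqref{MMrel-0}.

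First, I would take the $u_c \in V_c$ guaranteed by the hypothesis \eqref{VcA+}, namely one satisfying $\|u - u_c\|_{D_M}^2 \lesssim \|u\|_{A_M}^2$. The target is to control $\|u - u_c\|_D^2$ by $\|u\|_A^2$ for this same $u_c$. Applying the second inequality in \eqref{MMrel-0} to the vector $u - u_c$ gives
\[
\|u - u_c\|_D^2 = (u - u_c, u - u_c)_D \lesssim (u - u_c, u - u_c)_{D_M} = \|u - u_c\|_{D_M}^2.
\]
Next, I would invoke the hypothesis \eqref{VcA+} on the right-hand side, so
\[
\|u - u_c\|_D^2 \lesssim \|u\|_{A_M}^2.
\]
Finally, applying the first inequality in \eqref{MMrel-0} to $u$ itself yields
\[
\|u\|_{A_M}^2 = (u,u)_{A_M} \lesssim (u,u)_A = \|u\|_A^2,
\]
and composing the three estimates gives \eqref{VcA}.

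There is no real obstacle here; the only mild subtlety worth stating explicitly is that the \emph{same} $u_c$ is used on both sides, which is why the two one-sided bounds in \eqref{MMrel-0} must be applied to $u - u_c$ and to $u$ respectively, rather than reversed. Once that is noted, the argument is a two-line chain.
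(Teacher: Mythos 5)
Your chain of the two defining inequalities of an $M$-matrix relative around the hypothesis is exactly the intended argument; the paper states the lemma as an immediate consequence of the definition and omits the details, which are precisely the three estimates you wrote (applied to $u-u_c$ and to $u$, respectively). The proposal is correct and matches the paper's approach.
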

This result, combining with the two-level convergence result,  means that we only need to work on the M-matrix relative
of $A$ in order to get the estimate \eqref{VcA}. 

We next describe how to construct M-matrix relatives for a special
class of matrices.  We first prove an auxiliary result for a special
class of matrices defined via bilinear forms
\begin{equation}\label{b-form}
(A_bu,v) : = b(u,v) = \sum_{e\in \mathcal{E}_b} \omega_e (\delta_e u)(\delta_e v).
\end{equation}
Here $\mathcal{E}_b$ is the set of edges of a connected graph with
vertices $\{1,\ldots,k\}$ and $b(\cdot,\cdot)$ is the bilinear form corresponding
to a weighted graph Laplacian.  The other quantities in~\eqref{b-form} are defined as follows:
\[
\mbox{For $e\in \mathcal{E}_b$, $e=(i,j)$, we set $\delta_e u=(u_i-u_j)$}
\]

Some of the weights in
$b(\cdot,\cdot)$ may be negative, but they should not dominate: we
assume that $b(\cdot,\cdot)$ is positive semidefinite with one
dimensional kernel spanned by $(1,\ldots,1)^T$.  If the weights
$\omega_e$ were positive then it is easy to show that this assumption
holds. Indeed, the bilinear form is obviously semidefinite and the
second part of the assumption follows from the fact that the graph is
connected.  Thus, there exists a $\lambda_b>0$ such that for all
$u\in \mathbb{R}^k$ satisfying $\sum_{i=1}^k u_i = 0$ we have
\begin{equation}\label{algebraic-connectivity}
\lambda_b \|u\|_{\ell^2}^2 \le b(u,u).
\end{equation}
Let us now denote
\[
\mathcal{E}_b^+ = \{e\in \mathcal{E}_b\;\big|\; \omega_e > 0\}, \quad
\mathcal{E}_b^- = \{e\in \mathcal{E}_b\;\big|\; \omega_e \le 0\}. 
\]
and then split the bilinear form $b(\cdot,\cdot)$ in positive and
negative parts:
\begin{eqnarray}
&&b(u,v) = b_{+}(u,v) - b_{-}(u,v),\\
&& (A_{b,+}u,v)=b_+(u,v) = \sum_{e\in \mathcal{E}_b^+} \omega_e \delta_e u\delta_e v, 
\label{bplus}
\\
&&b_-(v,v) =  \sum_{e\in \mathcal{E}_b^-} |\omega_e| \delta_e u\delta_e v.\label{b-min} 
\end{eqnarray}
We observe that $A_b$ defined via the bilinear form $b(\cdot,\cdot)$
in~\eqref{b-form} is an $M$-matrix relative to itself if
$\mathcal{E}_b^{-} = \emptyset$, or, equivalently, $\mathcal{E}_b^+=\mathcal{E}_b$.

The following lemma gives an estimate of $b_-(\cdot,\cdot)$ and
$b_+(\cdot,\cdot)$ in terms of $b(\cdot,\cdot)$ basically showing that
$A_{b,+}$ may be used as $M$-matrix relative to $A_b$.
\begin{lemma}\label{MMrel} 
  If $\omega_- = \max_{e\in \mathcal{E}_b^-}|\omega_e|$ then we have the
  following inequalities for all $v\in \mathbb{R}^k$,
\begin{eqnarray}
&& b(v,v) \le b_+(v,v)\le \left(1+\frac{c(k)\omega_{-}}{\lambda_b}\right)b(v,v),
\label{e:ineq1}\\
&&\|v\|_{D_b}^2 \le \|v\|_{D_b^+}^2\le 
\left(1+\frac{c(k)\omega_{-}}{\lambda_b}\right)\|v\|_{D_b}^2. 
\label{e:ineq2}
\end{eqnarray}
where $D_b$ and $D_{b}^+$ are the diagonals of 
$A_b$ and $A_b^+$ and $c(k) = 2(k-1)$. 
\end{lemma}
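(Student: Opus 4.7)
The lower bounds in \eqref{e:ineq1} and \eqref{e:ineq2} are immediate: since $b_-\ge 0$ on the diagonal, we have $b_+(v,v)=b(v,v)+b_-(v,v)\ge b(v,v)$, and likewise the diagonal entries satisfy $(D_b^+)_{ii}=(D_b)_{ii}+\sum_{e\in\mathcal E_b^-:\,i\in e}|\omega_e|\ge (D_b)_{ii}$, so $\|v\|_{D_b^+}^2\ge\|v\|_{D_b}^2$. The real work lies in the matching upper bounds.

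For the upper bound in \eqref{e:ineq1}, my plan is to reduce to \eqref{algebraic-connectivity}. Since $b(\cdot,\cdot)$, $b_+(\cdot,\cdot)$ and $b_-(\cdot,\cdot)$ are all translation invariant (they depend only on edge differences $\delta_e v$), we may subtract off the mean and assume $\sum_{i=1}^k v_i=0$. The decisive estimate is the combinatorial bound
\[
b_-(v,v)=\sum_{e=(i,j)\in\mathcal E_b^-}|\omega_e|(v_i-v_j)^2
\le 2\omega_-\sum_{e=(i,j)\in\mathcal E_b^-}(v_i^2+v_j^2)
=2\omega_-\sum_{i=1}^k d_i^{-}\,v_i^2,
\]
where $d_i^-$ is the degree of vertex $i$ in the subgraph of negative edges. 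Since $d_i^-\le k-1$, this is bounded by $2(k-1)\omega_-\|v\|_{\ell^2}^2$. Applying \eqref{algebraic-connectivity} gives
\[
b_-(v,v)\le \frac{2(k-1)\omega_-}{\lambda_b}\,b(v,v),
\]
and adding $b(v,v)$ to both sides yields the desired bound with $c(k)=2(k-1)$.

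For \eqref{e:ineq2}, I would deduce it from \eqref{e:ineq1} by a pointwise argument on the standard basis. Since $(D_b)_{ii}=b(e_i,e_i)$ and $(D_b^+)_{ii}=b_+(e_i,e_i)$, applying \eqref{e:ineq1} to $v=e_i$ gives
\[
(D_b)_{ii}\le (D_b^+)_{ii}\le \Bigl(1+\tfrac{c(k)\omega_-}{\lambda_b}\Bigr)(D_b)_{ii},\qquad i=1,\dots,k.
\]
(Note that in applying \eqref{e:ineq1} to $e_i$, the translation invariance makes its nonzero mean harmless, and $(D_b)_{ii}\ge\lambda_b(k-1)/k>0$ so the inequality is nontrivial.) Multiplying by $v_i^2$ and summing over $i$ delivers \eqref{e:ineq2} with the same constant.

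The main conceptual step is the first one: choosing to bound $b_-(v,v)$ via $\|v\|_{\ell^2}^2$ and then invoking the algebraic connectivity $\lambda_b$ (which requires the WLOG reduction to mean-zero vectors). The bookkeeping in the combinatorial bound --- extracting $2(k-1)$ rather than a worse factor like $k$ or $|\mathcal E_b^-|$ --- is what forces the use of vertex degrees rather than edge counts, and is the only place where care is needed. Everything else is essentially a transcription.
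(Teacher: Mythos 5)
Your proposal is correct and follows essentially the same route as the paper's proof: reduce to mean-zero vectors, bound $b_-(v,v)$ by $2(k-1)\omega_-\|v\|_{\ell^2}^2$ (the paper enlarges the sum to all of $\mathcal{E}_b$ before using the degree bound, you use degrees in the negative subgraph directly --- the same estimate), invoke \eqref{algebraic-connectivity}, and obtain \eqref{e:ineq2} by applying \eqref{e:ineq1} to the coordinate basis vectors and summing with weights $v_i^2$. No gaps; the extra remarks on translation invariance and positivity of $(D_b)_{ii}$ are correct but not needed.
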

\begin{proof}
We first show the inequality
\begin{equation}
\label{e:MMrel}
 b_-(v,v)\le \frac{c(k)\omega_{-}}{\lambda_b}b(v,v).
\end{equation}
Clearly, we only need to consider all $v\in \mathbb{R}^k$ such that
$\sum_{i=1}^k v_i = 0$, because adding a multiple of $(1,\ldots,1)^T$
to $v$ does not change either side of the
inequality~\eqref{e:MMrel}. For such $v\in \mathbb{R}^k$ we
have
\begin{eqnarray*}
b_{-}(v,v) &\le & 
\omega_- \sum_{e\in \mathcal{E}_b^-} (\delta_e v)^2 
 \le  \omega_- \sum_{e\in \mathcal{E}_b} (\delta_e v)^2 \\
&\le & 2\omega_- \sum_{(i,j)\in \mathcal{E}_b} (v_i^2+v_j^2) 
\le 2(k-1)\omega_- \|v\|_{\ell^2}^2 \\
& \le & c(k) \omega_- \|v\|^2_{\ell^2}  \le  \frac{c(k) \omega_-}{\lambda_b} b(v,v),
\end{eqnarray*}

where we have used that a vertex $i$ cannot be in more than $(k-1)$ edges. 
This shows~\eqref{e:ineq1} after some obvious algebraic
manipulations.

The inequality~\eqref{e:ineq2} follows from~\eqref{e:ineq1}. For $i=1,2,\ldots,k$ we have
\begin{eqnarray*}
&&(D_b)_{ii} = b(\phi_i,\phi_i) \le 
b_+(\phi_i,\phi_i) =
(D_b^+)_{ii},\\
&&(D_b^+)_{ii} = b_+(\phi_i,\phi_i) \le 
\left(1+\frac{c(k)\omega_{-}}{\lambda_b}\right) 
 b(\phi_i,\phi_i) =
\left(1+\frac{c(k)\omega_{-}}{\lambda_b}\right) 
 (D_b)_{ii}.
\end{eqnarray*}
The proof is complete. 
\end{proof}

\section{$M$-matrix relatives of finite element stiffness matrices}\label{sec:m-matrix-fem}
In this section we show how to construct $M$-matrix relative to the
matrix resulting from a finite element discretization of the model
problem~\eqref{Model0} with linear elements.  We consider first an
isotropic problem with pure Neumann boundary condition \eqref{neumman}
and isotropic coefficient $\alpha(x)=a(x) I$, $x\in \Omega$. 

In the rest of this section, we make the following
assumptions on the coefficient and the geometry of $\Omega$:
\renewcommand{\labelenumi}{\arabic{enumi}.~}
\begin{enumerate}
\item The domain $\Omega\subset \mathbb{R}^d$ is partitioned into simplices
$\Omega=\cup_{T\in\mathcal{T}_h} T$.  

\item The coefficient $a(x)$ is a scalar valued function and its
  discontinuities are aligned with this partition.
\item We consider the Neumann problem, and, therefore, the bilinear
  form~\eqref{Vari} is
\begin{equation}\label{auv}
\int_{\Omega}a(x) \nabla v \cdot \nabla u  = 
\sum_{(i,j)\in \mathcal{E}} (-a_{ij})\delta_eu\delta_e v 
=\sum_{e\in \mathcal{E}} \omega_e\delta_eu\delta_e v
\end{equation}
\item 
It is well known~\cite{EAFE} that the off-diagonal entries of the stiffness matrix
$A$ are given by
\begin{eqnarray*}
&& \omega_e  =  -(\phi_j,\phi_i)_A = \sum_{T\supset e}\omega_{e,T}\\ 
&& \omega_{e,T} = \frac{1}{d(d-1)}\overline{a}_T |\kappa_{e,T}|\cot\alpha_{e,T}, \quad
\overline a_T= \frac{1}{|T|}\int_{T} a(x) \; dx
\end{eqnarray*}
Here, $e=(i,j)$ is a fixed edge with end points $x_i$ and $x_j$;
$T\supset e$ is the set of all elements containing $e$;
$|\kappa_{e,T}|$ is the volume of $(d-2)$-dimensional simplex opposite to $e$ in
$T$; $\alpha_{e,T}$ is the dihedral angle between the two faces in $T$ not containing $e$.
\item Following the notation used in Lemma~\ref{MMrel}, let
  $\mathcal{E}$ denote the set of edges in the graph defined by the
  triangulation and let $\mathcal{E}^{-}$ be the set of edges where
  $a_{ij}\ge 0$, $i\neq j$. The set complementary to $\mathcal{E}^-$ is
  $\mathcal{E}^+=\mathcal{E}\setminus\mathcal{E}^-$.  Then, with
  $\omega_e = -a_{ij}$, we have 
\begin{equation}\label{auv-do}
\int_{\Omega}a(x) \nabla v \cdot \nabla u  = 
\sum_{e\in \mathcal{E}^+} \omega_e\delta_eu\delta_ev-\sum_{e\in \mathcal{E}^-} |\omega_e|\delta_eu\delta_ev. 
\end{equation}

\item We also assume that the partitioning is such that the constant
  function is the only function in the null space of the bilinear
  form~\eqref{auv}.  This is, of course, the case when $\Omega$ is
  connected (which is true, as $\Omega$ is a domain).

\end{enumerate}

The non-zero off-diagonal entries of $A$ may have either positive or
negative sign, and, usually $\mathcal{E}^-\neq \emptyset$. The next
theorem shows that the stiffness matrix $A$ defined via the bilinear
form~\eqref{auv} is spectrally equivalent to the matrix $A_+$ defined
as
\begin{equation}\label{diag-compensate}
(A_+ u,v) = \sum_{e\in \mathcal{E}^+} \omega_e (u_i-u_j)(v_i-v_j). 
\end{equation}
Thus, we can ignore any positive off-diagonal entries in $A$, or
equivalently, we may drop all $\omega_e$ for $e\in \mathcal{E}^-$.
Indeed, $A_+$ is obtained from $A$ by adding to
the diagonal all positive off diagonal elements and setting the
corresponding off-diagonal elements to zero.  
This is a stronger result that we need later in the convergence theory
because it gives not only the inequalities~\eqref{MMrel-0} 
but also a spectral equivalence with the $M$-matrix relative $A_+$.
\begin{theorem}\label{m-matrix-plus}
  If $A$ is the stiffness matrix corresponding to linear finite
  element discretization of \eqref{Model0} with boundary conditions
  given by~\eqref{neumman}. Then $A_+$ is an $M$-matrix relative
  of $A$ which is spectrally equivalent to $A$. The constants of equivalence depend only on the shape regularity of the mesh. Moreover, the graph
  corresponding to $A_+$ is connected.
\end{theorem}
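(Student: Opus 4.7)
The plan is to apply Lemma~\ref{MMrel} element-by-element and then assemble. For each simplex $T\in\mathcal{T}_h$ the local bilinear form
\[
b_T(u,v)=\sum_{e\subset T}\omega_{e,T}\,\delta_e u\,\delta_e v
\]
is positive semidefinite on the $(d+1)$-dimensional local space with one-dimensional kernel spanned by constants on $T$. Shape regularity forces every weight to satisfy $|\omega_{e,T}|\le C(\sigma)\,\overline a_T\,h_T^{d-2}$ (dihedral angles are bounded away from $0$ and $\pi$, and $|\kappa_{e,T}|\sim h_T^{d-2}$), and it also yields a matching lower bound $\lambda_b^T\ge c(\sigma)\,\overline a_T\,h_T^{d-2}$ on the algebraic connectivity of the local graph Laplacian, obtained by scaling to a reference simplex in the compact shape-regular family. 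Hence Lemma~\ref{MMrel} applies on each $T$ with a constant depending only on $\sigma$:
\[
b_{T,-}(v,v)\le C(\sigma)\,b_T(v,v),\qquad T\in\mathcal{T}_h,
\]
where $b_{T,-}$ collects only the negative-weight edge contributions inside $T$.

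Next I assemble the estimate globally. For each global edge $e\in\mathcal{E}^-$ the total weight obeys $|\omega_e|\le\sum_{T\supset e,\,\omega_{e,T}<0}|\omega_{e,T}|$, because positive element contributions can only reduce the magnitude of the (negative) total, so
\[
(A_-v,v)\le\sum_T b_{T,-}(v,v)\le C(\sigma)(Av,v).
\]
Since $A$ is a graph Laplacian with vanishing row sums, one has the identity $(Av,v)=(A_+v,v)-(A_-v,v)$, and therefore
\[
(Av,v)\le(A_+v,v)\le (1+C(\sigma))(Av,v),
\]
with constants depending only on the shape regularity constant $\sigma$. This is the asserted spectral equivalence; in particular $\|v\|_{A_+}^2\lesssim\|v\|_A^2$, and the elementary estimate $(D)_{ii}=a_{ii}=(D_+)_{ii}-\sum_{j:a_{ij}>0}a_{ij}\le(D_+)_{ii}$ gives $\|v\|_D^2\le\|v\|_{D_+}^2$, verifying the two inequalities in~\eqref{MMrel-0}.

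It remains to check that $A_+$ satisfies~\eqref{eq:sign1}--\eqref{eq:sign3} and that its graph is connected. By construction the off-diagonals of $A_+$ are non-positive and $A_+$ is a graph Laplacian with non-negative edge weights, hence positive semidefinite. The diagonal is strictly positive because for each vertex $i$ the identity $a_{ii}=-\sum_{j\ne i}a_{ij}>0$ forces at least one strictly negative $a_{ij}$, i.e., at least one incident edge in $\mathcal{E}^+$. Connectivity of the graph of $A_+$ is then free from the spectral equivalence: $\ker A_+=\ker A$ is one-dimensional (the pure-Neumann null space, since $\Omega$ is a connected domain), whereas for a graph Laplacian with non-negative weights the kernel dimension equals the number of connected components of the underlying graph. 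Hence the graph is connected.

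The main obstacle is the shape-regular lower bound $\lambda_b^T\ge c(\sigma)\,\overline a_T\,h_T^{d-2}$ used in the first step. The cleanest route is to affine-scale $T$ to a reference simplex, observe that the normalized cotangent weights are continuous functions of the geometry on the compact set of shape-regular shapes, and combine compactness with the non-degeneracy of the element stiffness matrix modulo constants to extract a uniform positive lower bound. Everything else—the element-by-element summation and the two-sided comparison $(Av,v)\sim(A_+v,v)$—is routine once this local estimate is in place.
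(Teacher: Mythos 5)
Your proof is correct and follows essentially the same route as the paper: you apply Lemma~\ref{MMrel} element by element, using shape regularity to get $\lambda_{\min}(A_T)\eqqsim \overline a_T h_T^{d-2}$ and $\omega_{-,T}\lesssim \overline a_T h_T^{d-2}$, and then assemble the local estimates globally. The only cosmetic difference is in the assembly step, where you bound the global negative part via $|\omega_e|\le\sum_{T\supset e,\;\omega_{e,T}<0}|\omega_{e,T}|$ and use $(Av,v)=(A_+v,v)-(A_-v,v)$, whereas the paper bounds the global positive part via $\max\{0,\sum_T\omega_{e,T}\}\le\sum_T\max\{0,\omega_{e,T}\}$; the two are interchangeable.
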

\begin{proof}
The goal is to show that 
\[
\|u\|_A^2\le \|u\|_{A_+}^2\lesssim \|u\|_A^2,
\]
where the constants hidden in $\lesssim$ depend only on the shape regularity of
the mesh. 

The lower bound is clear, by just comparing~\eqref{auv-do}
and~\eqref{diag-compensate} As we discussed earlier in
Lemma~\ref{MMrel} such inequality shows that the graph corresponding
to $A_+$ is connected. Indeed, since $\|u\|_{A}^2$ vanishes only for
$u=(1,\ldots,1)^t$ it follows that $\|u\|_{A_+}^2$ also vanishes only
for $u=(1,\ldots,1)^t$ which proves that $A_+$ has only one connected
component.

To prove the upper bound, we fix an element $T$ and consider the local
stiffness matrix $A_T$ given by
\[
(A_T u,v) = b_T(u,v) =  
\sum_{e\in \partial T}\omega_{e,T} (\delta_e u)(\delta_ev).
\]
Denote \(\mathcal{E}_{T} = \{(i,j)\;\big|\; 1\le i<j\le (d+1)\}\),
and let $\mathcal{E}_{T}^{\pm}$ corresponding to $A_T$ be defined in a
way analogous to the definition of $\mathcal{E}^{\pm}$ for $A$. 

It is immediate to see that in the notation of Lemma~\ref{MMrel} the
minimum nonzero eigenvalue and the maximum in modulus negative
coefficient $\omega_{-,T}$ satisfy:
\[
\lambda_T=\lambda_{\min}(A_T)\eqqsim h_T^{d-2}\overline a_T,\quad\mbox{and}\quad
\omega_{-,T}=\max_{e\in \mathcal{E}_T^-}|\omega_e|
\lesssim h_T^{d-2} \overline a_T.
\]  
These relations hold with constants independent of
the mesh size $h_T$, but dependent on the shape regularity of the
mesh. Let us consider the bilinear form
\[
b_{+,T}(u,v) = \sum_{e\in \mathcal{E}_{T}^+}\omega_{e,T} (\delta_e u)(\delta_ev).
\]
Lemma~\ref{MMrel} then implies that for every $T$, $b_{+,T}(u,v)$ is
spectrally equivalent to $b_{T}(u,v)$ and the constants of spectral
equivalence depend only on the shape regularity of the mesh.  Summing
over all elements and using this spectral equivalence then gives:
\begin{eqnarray*}
&& \sum_{T}\sum_{e\in \mathcal{E}_T^+} \omega_{e,T}(\delta_eu)^2
\lesssim  \sum_{T}\sum_{e\in \mathcal{E}_T} \omega_{e,T} (\delta_eu)^2=\|u\|_A^2.
\end{eqnarray*}
On the other hand we have
\begin{eqnarray*}
  \|u\|_{A_+}^2 & = & \sum_{e\in \mathcal{E}^+}\omega_e(\delta_eu)^2 = 
\sum_{e\in \mathcal{E}}\max\{0, \omega_{e}\} (\delta_eu)^2
 = \sum_{T}\max\left\{0,\sum_{e\in \mathcal{E}_T} \omega_{e,T}\right\} (\delta_eu)^2\\
& \le & 
\sum_{T}\sum_{e\in \mathcal{E}_T} \max\left\{0,\omega_{e,T}\right\} (\delta_eu)^2
 = 
\sum_{T}\sum_{e\in \mathcal{E}_T^+} \omega_{e,T} (\delta_eu)^2.
\end{eqnarray*}
Combining these inequalities complete the proof.

\end{proof}
A simple corollary which we use later in proving estimates on the convergence rate 
is as follows.
\begin{corollary}\label{corollary-diag}
  Assume that $A$ is the stiffness matrix for piece-wise linear
  discretization of equation~\eqref{auv} and $A_+$ is the $M$-matrix
  relative defined in Theorem~\ref{m-matrix-plus}. Then the diagonal
  $D$ of $A$ and the diagonal $D_+$ of $A_+$ are spectrally
  equivalent.
\end{corollary}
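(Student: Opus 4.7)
The plan is a direct reduction to Theorem~\ref{m-matrix-plus}. The key observation is that diagonal entries of symmetric matrices can be extracted by evaluating the associated quadratic form on standard basis vectors. Let $e_i\in\mathbb{R}^N$ denote the $i$-th standard basis vector, i.e., the coefficient vector of the nodal basis function $\phi_i$. Then, directly from the definitions of $D$ and $D_+$,
\[
D_{ii} = (Ae_i,e_i) = \|e_i\|_A^2, \qquad (D_+)_{ii} = (A_+ e_i,e_i) = \|e_i\|_{A_+}^2.
\]

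Next, I would invoke Theorem~\ref{m-matrix-plus}, which gives $\|v\|_A^2 \le \|v\|_{A_+}^2 \lesssim \|v\|_A^2$ for every coefficient vector $v$, with the hidden constants depending only on the shape regularity of the mesh. Specializing this chain of inequalities to $v = e_i$ for each $i=1,\ldots,N$ yields the entrywise spectral equivalence $D_{ii} \eqqsim (D_+)_{ii}$, uniformly in $i$, with constants depending only on the shape regularity.

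Finally, to lift this entrywise equivalence to spectral equivalence of the diagonal matrices themselves, I would observe that for any vector $w\in\mathbb{R}^N$,
\[
(Dw,w) = \sum_{i=1}^{N} D_{ii}\, w_i^2 \;\eqqsim\; \sum_{i=1}^{N} (D_+)_{ii}\, w_i^2 = (D_+ w,w),
\]
with the same constants as in the previous step. This establishes $D\eqqsim D_+$ and proves the corollary.

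There is essentially no genuine obstacle: the statement is a one-line specialization of the previously established spectral equivalence $A \eqqsim A_+$ to the standard basis vectors, combined with the elementary fact that entrywise equivalence of diagonals is the same as spectral equivalence of diagonal matrices. The only thing worth being careful about is bookkeeping of the equivalence constants to confirm that they inherit the dependence on shape regularity (and nothing else) from Theorem~\ref{m-matrix-plus}.
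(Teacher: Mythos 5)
Your proof is correct and is essentially the paper's own argument: the paper likewise obtains $[D]_{jj}=(\phi_j,\phi_j)_A \eqqsim (\phi_j,\phi_j)_{A_+}=[D_+]_{jj}$ by specializing the spectral equivalence of Theorem~\ref{m-matrix-plus} to the nodal basis vectors. Your final step converting entrywise equivalence of the diagonals into spectral equivalence of the diagonal matrices is just making explicit what the paper leaves implicit.
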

\begin{proof}
For the
  diagonal elements of $A$ and $A_+$ we have
\[
[D]_{jj} = (\phi_j,\phi_j)_A \eqqsim (\phi_j,\phi_j)_{A_+} =[D_+]_{jj}.
\]
The equivalence ``$\eqqsim$'' written above follows from Lemma~\ref{m-matrix-plus}. 
\end{proof}

Corollary \ref{corollary-diag} together with Lemma~\ref{lemma-equiv}
provide the theoretical foundation for utilizing $M$-matrix relatives
in the design of AMG methods for linear systems with finite element
matrices.

\section{A general approach to the construction of coarse spaces}\label{sec:unifiedAMG}
We assume there exists a sequence of spaces
    $V_1, V_2, \dots, V_J$, which are not necessarily subspaces of
    $V$, but each of them is related to the original space $V$ by a
    linear operator 
\begin{equation} 
    \Pi_j: V_j\mapsto V.  
\end{equation} 
Our very basic assumption is that the following decomposition holds:
\begin{equation*}
    V=\sum_{j=1}^J\Pi_jV_j.
\end{equation*}
This means that for any $v\in V$, there exists $v_j\in V_j$ (which may not be unique) such that 
\begin{equation*}
    v=\sum_{j=1}^J\Pi_jv_j.
\end{equation*}
Denote
$$\utilde{W} = V_1\times V_2\times...\times V_J,
$$ 
with the inner product 
$$(\utilde u,\utilde v) = \sum_{i=1}^J(u_i,v_i),
$$ 
where $\utilde u=(u_1,...,u_J)^T$ and $\utilde
v=(v_1,...,v_J)^T$.  Or more generally, for $\utilde f=(f_1, \ldots, f_J)^T\in
\utilde V'$ with $f_i\in V_i'$, we can define
$$
( \utilde f, \utilde v) =\sum_{i=1}^J( f_i, v_i).
$$
We now define $\Pi_W:\utilde W\mapsto V$ by
$$
\Pi_W\utilde u = \sum_{i=1}^J \Pi_i u_i, \quad\forall \utilde
u=(u_1,...,u_J)^T\in \utilde W. 
$$ 
Formally, we can write 
$$
\Pi_W=(\Pi_1,\ldots,\Pi_J) \mbox{ and }
\Pi_W'=
\begin{pmatrix}
\Pi_1'\\
\vdots\\
\Pi_J'
\end{pmatrix}.
$$

We assume there is an operator $A_j: V_j\mapsto V_j'$ which is symmetric, positive semi-definite for each $j$ and define $\utilde A_W: \utilde W\mapsto \utilde W'$ as follows
  \begin{equation}\label{utildeAW}
      \utilde A_W: = \operatorname{diag}(A_1, A_2, \dots, A_J).
  \end{equation}

    For each $j$, we assume there is a symmetric positive
    definite operator $D_j: V_j\mapsto V_j'$, and define $\utilde D:
    \utilde W\mapsto \utilde W'$ as follows
  \begin{equation}\label{utildeD}
      \utilde D: = \operatorname{diag}(D_1, D_2, \dots, D_J).
  \end{equation}

    We associate a coarse space $V_j^c$, $V_j^c \subset V_j$, with
    each of the spaces $V_j$, and consider the corresponding
    orthogonal projection $Q_j: V_j\mapsto V_j^c$ with respect to
    $(\cdot, \cdot)_{D_j}$. We define $\utilde Q: \utilde W\mapsto \utilde W'$ by
    \begin{equation}
        \utilde Q: =\operatorname{diag}(Q_1, Q_2, \dots, Q_J).
    \end{equation}

\begin{assumption}\label{a:2level}\quad
  \begin{enumerate}
\item 
        The following inequality holds for all $\utilde w\in \utilde W$:
        \begin{equation}\label{assm:D_j}
            \|\Pi_W \utilde w\|_D^2\le C_{p,2}\|\utilde w\|_{\utilde D}^2, 
        \end{equation}
        for some positive constant $C_{p, 2}$.
\item For each $w\in V$, there exists a
  $\utilde w\in \utilde W$
  such that $w=\Pi_W\utilde w$ and the following inequality holds
    \begin{equation}\label{sum_Aj}
        \|\utilde w\|_{\utilde A_W}^2\le C_{p,1}\|w\|_A^2
\end{equation}
with a positive constant $C_{p, 1}$ independent of $w$. 
\item For all $j$, 
  \begin{equation}
    \label{NAjVjc}
N(A_j)\subset V_j^c.    
  \end{equation}

 \end{enumerate}
\end{assumption}

\begin{remark}
The above assumption implies that 
$$
w\in N(A) \Rightarrow \utilde w\in N(A_1)\times\ldots \times N(A_J).
$$
\end{remark}

We define the global coarse space $V_c$ by 
\begin{equation}\label{V_c}
    V_c:=\sum_{j=1}^J\Pi_j V_j^c.
\end{equation}

Further, for each coarse space $V_j^c$, we define
\begin{equation}\label{muj}
    \mu^{-1}_j(V_j^c):=\max_{v_j\in V_j}\min_{v_j^c\in V_j^c}\frac{\|v_j-v_j^c\|_{D_j}^2}{\|v_j\|_{A_j}^2},
\end{equation}
and 
\begin{equation}
  \label{muc}
    \mu_c=\min_{1\le j\le J}\mu_j(V_j^c),
\end{equation}
which is finite, thanks to Assumption \ref{a:2level}.3 (namely, \eqref{NAjVjc}).

By the two level convergence theory, if $D_j$ provides a convergent smoother, then
$(1-\mu_j(V_j^c))$ is the convergence rate for
two-level AMG method for $V_j$ with coarse space $V_j^c$. Next theorem
 gives an estimate on the convergence of the two level method in
terms of the constants from Assumptions~\ref{a:2level}
and $\mu_c$.
\begin{theorem}\label{thm:two-level}
If Assumption~\ref{a:2level} holds, then for each $v\in V$,  
    we have the following error estimate
    \begin{equation}
        \min_{v_c\in V_c}\|v-v_c\|_D^2 \le C_{p,1}C_{p,2}\mu_c^{-1}\|v\|_A^2.
    \end{equation}
\end{theorem}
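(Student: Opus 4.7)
The plan is to execute a divide-and-conquer strategy: decompose $v$ using the stable decomposition from Assumption~\ref{a:2level}.2, approximate each local component by an element of $V_j^c$ using the definition of $\mu_j(V_j^c)$, glue the local coarse components back together via $\Pi_W$ to produce a candidate $v_c \in V_c$, and finally control the error by the stability bound from Assumption~\ref{a:2level}.1.

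Concretely, given $v \in V$, I first invoke Assumption~\ref{a:2level}.2 to obtain $\utilde{w} = (w_1, \ldots, w_J) \in \utilde{W}$ with $v = \Pi_W \utilde{w} = \sum_{j=1}^J \Pi_j w_j$ and
\[
\|\utilde{w}\|_{\utilde{A}_W}^2 = \sum_{j=1}^J \|w_j\|_{A_j}^2 \le C_{p,1} \|v\|_A^2.
\]
Next, for each $j$, the definition~\eqref{muj} of $\mu_j(V_j^c)$ together with Assumption~\ref{a:2level}.3 (which ensures $\mu_j(V_j^c)$ is strictly positive and the max-min is attained in a meaningful way) supplies $w_j^c \in V_j^c$ with
\[
\|w_j - w_j^c\|_{D_j}^2 \le \mu_j(V_j^c)^{-1}\|w_j\|_{A_j}^2 \le \mu_c^{-1}\|w_j\|_{A_j}^2,
\]
using the bound $\mu_j(V_j^c)^{-1} \le \mu_c^{-1}$ from~\eqref{muc}.

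I then set $v_c := \Pi_W \utilde{w}^c = \sum_{j=1}^J \Pi_j w_j^c$, where $\utilde{w}^c = (w_1^c, \ldots, w_J^c)$. Since each $w_j^c \in V_j^c$, the definition~\eqref{V_c} gives $v_c \in V_c$. By linearity, $v - v_c = \Pi_W(\utilde{w} - \utilde{w}^c)$, and applying Assumption~\ref{a:2level}.1 to $\utilde w - \utilde w^c$ yields
\[
\|v - v_c\|_D^2 \le C_{p,2}\|\utilde{w} - \utilde{w}^c\|_{\utilde{D}}^2
= C_{p,2} \sum_{j=1}^J \|w_j - w_j^c\|_{D_j}^2
\le C_{p,2}\mu_c^{-1} \sum_{j=1}^J \|w_j\|_{A_j}^2
\le C_{p,1}C_{p,2}\mu_c^{-1}\|v\|_A^2,
\]
and taking the infimum over $v_c \in V_c$ on the left gives the desired bound.

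There is no real obstacle here; the theorem is essentially a bookkeeping consequence of the three assumptions, each of which plays a distinct role (stable decomposition, $\Pi_W$-stability in $D$-norm, and locally finite $\mu_j$). The only subtle point worth flagging in the write-up is why $\mu_c$ is strictly positive so that $\mu_c^{-1}$ is finite: this is exactly the content of~\eqref{NAjVjc}, which guarantees that within each local problem the kernel of $A_j$ lies in $V_j^c$, so the ratio in~\eqref{muj} is uniformly controlled. Everything else is linear algebra driven by the block-diagonal structure of $\utilde{A}_W$ and $\utilde{D}$.
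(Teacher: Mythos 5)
Your proof is correct and follows essentially the same route as the paper: decompose $v$ via Assumption~\ref{a:2level}.2, project each component onto $V_j^c$ (your componentwise choice of $w_j^c$ is exactly the $D_j$-orthogonal projection $Q_j w_j$ the paper uses via $\utilde Q$), glue with $\Pi_W$, and apply Assumption~\ref{a:2level}.1. Your remark on why~\eqref{NAjVjc} keeps $\mu_c^{-1}$ finite is a welcome bit of extra care that the paper only states implicitly.
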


    \begin{proof}
        By Assumption~\ref{a:2level}, for each $v\in V$, there exists $\utilde v\in \utilde V$ such that 
        \begin{equation}
            v=\Pi_W\utilde v
        \end{equation}
        and \eqref{sum_Aj} is satisfied.

        By the definition of $\mu_c$, we have
        \begin{equation}
            \|\utilde v-\utilde Q\utilde v\|_{\utilde D}^2\le \mu_c^{-1} \|\utilde v\|_{\utilde A_W}^2.
        \end{equation}
        We let $v_c=\Pi_W\utilde Q\utilde v$. Then $v_c\in V_c$ and by
        Assumption~\ref{a:2level}, we
        have
        \begin{equation*}
            \|v-v_c\|_D^2 =  \|\Pi_W(\utilde v- \utilde Q\utilde v)\|_D^2 \le  C_{p,2}\|\utilde v-\utilde Q\utilde v\|_{\utilde D}^2 \le  C_{p,2}\mu_c^{-1}\|\utilde v\|_{\utilde A_W}^2\le  C_{p,1}C_{p,2}\mu_c^{-1}\|v\|_A^2.
        \end{equation*}
        
      \end{proof}

We define another product space
\begin{equation}
    \utilde{V}:= V_c\times V_1\times V_2\times \cdots \times V_J,
\end{equation}
and we set $\Pi_c: V_c\mapsto V$ to be the natural inclusion from $V_c$ to $V$. Then we define $\Pi: \utilde V\mapsto V$ by
\begin{equation}
    \Pi:=(\Pi_c~\Pi_1~\Pi_2~\cdots~\Pi_J),
\end{equation}
and $\utilde A: \utilde V\mapsto \utilde V'$ by
\begin{equation}
  \utilde A_: = \begin{pmatrix}
      A_c & & & \\
      & A_1 & & \\
      & & \ddots & \\
      & & & A_J \\
      \end{pmatrix},
\end{equation}
where $A_c: V_c\mapsto V_c'$ is given as
\begin{equation}
    A_c: =\Pi_c'A\Pi_c.
\end{equation}
And $\utilde B: \utilde V\mapsto \utilde V'$ is given as
\begin{equation}
    \utilde B: = \begin{pmatrix}
      A_c^{-1} & & & &\\
      & D_1^{-1} & & &\\
      & & D_2^{-1} & &\\
      & & & \ddots &\\
      & & & & D_J^{-1}\\
      \end{pmatrix},
\end{equation}

We introduce the additive preconditioner $\widehat B$ 
\begin{equation}\label{additive_B}
    \widehat B: = \Pi \utilde B\Pi' = \Pi_cA_{c}^{-1}\Pi_c' + \sum_{j=1}^J \Pi_j D_j^{-1}\Pi_j',
\end{equation}
and we have the following lemma.

\begin{lemma}\label{lem:tBA0}
    If Assumption~\ref{a:2level} holds, then for any $v\in V$, there exists $\utilde v\in \utilde V$ such that  
    $$\|\utilde v\|_{\utilde B^{-1}}\le \tilde \mu_0\|v\|_A$$
    with $\tilde \mu_0$ being a constant depending on $C_{p,1}$, $C_{p,2}$, $\mu_c$ and $c^D$. 
\end{lemma}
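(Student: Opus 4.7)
The plan is to construct $\utilde v = (v_c, w_1,\ldots,w_J) \in \utilde V$ satisfying $\Pi \utilde v = v$ by starting from a stable splitting of $v$ in $\utilde W$ and then using the local projections $Q_j$ to peel off a coarse piece, leaving a residual whose components naturally sit in the $D_j$-norm. Because $\Pi_c$ is the natural inclusion, $\|v_c\|_{A_c}^2 = \|v_c\|_A^2$, and
\begin{equation*}
\|\utilde v\|_{\utilde B^{-1}}^2 \;=\; \|v_c\|_A^2 + \sum_{j=1}^J \|w_j\|_{D_j}^2,
\end{equation*}
so the task reduces to bounding each of these two pieces by a constant multiple of $\|v\|_A^2$.

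For the non-coarse components, I first invoke Assumption~\ref{a:2level}.2 to obtain $\utilde v_W=(v_1,\ldots,v_J)\in \utilde W$ with $v = \Pi_W \utilde v_W$ and $\sum_{j}\|v_j\|_{A_j}^2 \le C_{p,1}\|v\|_A^2$. I then set $w_j := v_j - Q_j v_j$ and $v_c := \Pi_W \utilde Q \utilde v_W = \sum_j \Pi_j Q_j v_j \in V_c$, so that $v - v_c = \sum_j \Pi_j w_j = \Pi_W \utilde w$ with $\utilde w = (w_1,\ldots,w_J)$. The defining inequalities of $\mu_j(V_j^c)$ in \eqref{muj} and $\mu_c$ in \eqref{muc}, whose finiteness is guaranteed by $N(A_j)\subset V_j^c$, then yield
\begin{equation*}
\sum_{j=1}^J \|w_j\|_{D_j}^2 \;\le\; \mu_c^{-1} \sum_{j=1}^J \|v_j\|_{A_j}^2 \;\le\; C_{p,1}\mu_c^{-1}\|v\|_A^2.
\end{equation*}

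For the coarse component, I use $\|v_c\|_A^2 \le 2\|v\|_A^2 + 2\|v - v_c\|_A^2$ together with the chain $\|\cdot\|_A^2 \le \|\cdot\|_{\bar R^{-1}}^2 \le c^D\|\cdot\|_D^2$ from the convergent-smoother assumption and \eqref{star-equiv-norms}. Combining with Assumption~\ref{a:2level}.1 applied to $\Pi_W\utilde w = v-v_c$,
\begin{equation*}
\|v-v_c\|_A^2 \;\le\; c^D\|v-v_c\|_D^2 \;\le\; c^D C_{p,2}\sum_{j=1}^J \|w_j\|_{D_j}^2 \;\le\; c^D C_{p,1}C_{p,2}\mu_c^{-1}\|v\|_A^2,
\end{equation*}
so summing both contributions gives $\|\utilde v\|_{\utilde B^{-1}}^2 \le \tilde\mu_0^2\|v\|_A^2$ with $\tilde\mu_0$ depending only on $C_{p,1}$, $C_{p,2}$, $c^D$ and $\mu_c$, as required.

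The main conceptual point, which I expect to be the only nontrivial step, is the mismatch between the norm used in the stable decomposition (the $\utilde A_W$-norm furnished by Assumption~\ref{a:2level}.2) and the norm appearing in the non-coarse diagonal block of $\utilde B^{-1}$ (the $\utilde D$-norm). The device that bridges this gap is the local smoothing step $v_j \mapsto v_j - Q_j v_j$, which by the very definition of $\mu_c$ converts an $A_j$-norm bound into a $D_j$-norm bound; once this observation is in place, everything else follows by combining the norm equivalences and assumptions already established earlier in the paper.
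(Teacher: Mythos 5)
Your construction of $\utilde v$ (coarse part $v_c=\Pi_W\utilde Q\utilde w$, fine parts $w_j=v_j-Q_jv_j$), the use of $\mu_c$ to pass from $A_j$- to $D_j$-norms, and the triangle-inequality plus $\|\cdot\|_A\le\|\cdot\|_{\bar R^{-1}}\le \sqrt{c^D}\|\cdot\|_D$ chain for $\|v_c\|_A$ are exactly the paper's argument; the only cosmetic difference is that you re-derive the bound $\|v-v_c\|_D^2\le C_{p,1}C_{p,2}\mu_c^{-1}\|v\|_A^2$ inline rather than citing Theorem~\ref{thm:two-level}. The proof is correct and essentially identical to the paper's.
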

\begin{proof}
    By Assumption~\ref{a:2level}, for each $v\in V$, there exists $\utilde w=(v_1~\cdots v_J)^T\in \utilde W$ such that $v=\Pi_W\utilde w$ and \eqref{sum_Aj} holds, namely,
    $$\|\utilde w\|_{\utilde A_W}^2\le C_{p, 1}\|v\|_A^2.$$
    We then define $\utilde v\in \utilde V$ by
    \begin{equation}
        \utilde v:=\begin{pmatrix}
            v_c\\
            \utilde w - \utilde Q \utilde w\\
            \end{pmatrix},
    \end{equation}
    with $v_c:= \Pi_W\utilde Q \utilde w$. Obviously, we have $\Pi\utilde v =v$, and $v_c$ satisfies
    \begin{equation}\label{Vc_approx}
        \|v-v_c\|_D^2 \le C_{p,1}C_{p,2}\mu_c^{-1}\|v\|_A^2.
    \end{equation}
    By Theorem~\ref{thm:two-level}, we have
    \begin{eqnarray*}
        \|v_c\|_A^2 &\le & 2\|v-v_c\|_A^2 +2\|v\|_A^2\\
        &\le & 2\|v-v_c\|_{\bar R^{-1}}^2 + 2\|v\|_A^2\\
        &\le & 2c^D\|v-v_c\|_D^2+2\|v\|_A^2\\
        &\le & 2c^DC_{p,1}C_{p,2}\mu_c^{-1}\|v\|_A^2+2\|v\|_A^2.
    \end{eqnarray*}
    Then we have
    \begin{eqnarray*}
        (\utilde B^{-1} \utilde v, \utilde v) & = & \|\utilde  w - \utilde Q\utilde w\|_{\utilde D}^2 +\|v_c\|_A^2\\
        & \le & \mu_c^{-1}\|\Pi_W\utilde w\|_A^2 +\|v_c\|_A^2\\
        & \le & C_{p,1}\mu_c^{-1}\|v\|_A^2 + 2(c^DC_{p,1}C_{p,2}+1)\|v\|_A^2 \\
        & = & (C_{p,1}\mu_c^{-1} + 2c^DC_{p,1}C_{p,2}+2)\|v\|_A^2 \\
    \end{eqnarray*}
\end{proof}

\begin{lemma}\label{lem:tBA1}
    If Assumption~\ref{assm:D_j} holds, then the following inequality holds for all $\utilde v\in \utilde V$
    $$\|\Pi\utilde v\|_A\le \tilde \mu_1\|\utilde v\|_{\utilde B^{-1}}, $$
    with constant $\tilde \mu_1$ depends on $C_{p,2}$ and $c^D$.
\end{lemma}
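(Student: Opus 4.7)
The plan is to decompose $\Pi\utilde v$ into its coarse component and its ``fine'' component from $\utilde W$, apply the triangle inequality in $\|\cdot\|_A$, and then bound each piece separately using the available tools.

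More precisely, write $\utilde v = (v_c,\utilde w)$ with $\utilde w=(v_1,\ldots,v_J)^T \in \utilde W$. By definition of $\Pi$ and the fact that $\Pi_c$ is the natural inclusion,
\[
\Pi \utilde v = v_c + \Pi_W \utilde w,
\]
so that $\|\Pi\utilde v\|_A^2 \le 2\|v_c\|_A^2 + 2\|\Pi_W \utilde w\|_A^2$. The first term is already a component of $\|\utilde v\|_{\utilde B^{-1}}^2$, since $\|v_c\|_A^2 = (A_c v_c,v_c)$.

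For the second term, I would chain together the two norm comparisons already at our disposal. From the standing assumption on the smoother we have $\|z\|_A^2 \le \|z\|_{\bar R^{-1}}^2$ for every $z\in V$, and from \eqref{star-equiv-norms} we have $\|z\|_{\bar R^{-1}}^2 \le c^D \|z\|_D^2$. Applying these to $z = \Pi_W \utilde w$ and then invoking Assumption~\ref{a:2level}.1 (inequality~\eqref{assm:D_j}) gives
\[
\|\Pi_W \utilde w\|_A^2 \;\le\; c^D \|\Pi_W\utilde w\|_D^2 \;\le\; c^D C_{p,2}\, \|\utilde w\|_{\utilde D}^2
\;=\; c^D C_{p,2}\sum_{j=1}^J \|v_j\|_{D_j}^2.
\]

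Combining the two estimates yields
\[
\|\Pi\utilde v\|_A^2 \;\le\; 2\|v_c\|_A^2 + 2 c^D C_{p,2}\sum_{j=1}^J \|v_j\|_{D_j}^2
\;\le\; 2\bigl(1 + c^D C_{p,2}\bigr)\|\utilde v\|_{\utilde B^{-1}}^2,
\]
so we may take $\tilde\mu_1 = \sqrt{2(1+c^D C_{p,2})}$, which depends only on $c^D$ and $C_{p,2}$ as claimed. There is no real obstacle here: the only ingredients are the triangle inequality, the smoother-assumption $\|\cdot\|_A \le \|\cdot\|_{\bar R^{-1}}$, the equivalence \eqref{star-equiv-norms}, and the boundedness hypothesis \eqref{assm:D_j}. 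The mildly delicate point is simply making sure to pass from $\|\cdot\|_A$ to $\|\cdot\|_D$ through $\|\cdot\|_{\bar R^{-1}}$, which is exactly how the constant $c^D$ enters $\tilde\mu_1$.
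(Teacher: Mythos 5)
Your proof is correct and follows essentially the same route as the paper: the same decomposition $\Pi\utilde v = v_c + \Pi_W\utilde w$, the chain $\|\cdot\|_A^2\le\|\cdot\|_{\bar R^{-1}}^2\le c^D\|\cdot\|_D^2$ together with \eqref{assm:D_j}, and the elementary inequality $\|a+b\|^2\le 2\|a\|^2+2\|b\|^2$; the paper merely writes the argument in the reverse direction, bounding $(\utilde B^{-1}\utilde v,\utilde v)$ from below instead of $\|\Pi\utilde v\|_A^2$ from above. Your explicit constant $\tilde\mu_1=\sqrt{2(1+c^D C_{p,2})}$ is a valid instance of the paper's unspecified $C(c^D,C_{p,2})$.
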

\begin{proof}
    For any decomposition $v=\Pi\utilde v = \Pi_W \utilde w +v_c$, we have
    \begin{eqnarray*}
        (\utilde B^{-1} \utilde v, \utilde v) & = & \|\utilde w\|_{\utilde D_W}^2 +\|v_c\|_A^2 \ge  \frac{1}{C_{p,2}}\|\Pi_W \utilde w\|_D^2 +\|v_c\|_A^2\\
        & = & \frac{1}{C_{p,2}}\|v-v_c\|_D^2 +\|v_c\|_A^2 \ge  \frac{1}{c^DC_{p,2}}\|v-v_c\|_{\bar{R}^{-1}}^2 +\|v_c\|_A^2\\
        & \ge & \frac{1}{c^DC_{p,2}}\|v-v_c\|_A^2 +\|v_c\|_A^2 \ge C(c^D, C_{p,2})\|v\|_A^2.
    \end{eqnarray*}
\end{proof}

Combining Lemma~\ref{lem:tBA0} and Lemma~\ref{lem:tBA1}, we
immediately have the following bound on the condition number of the
preconditioned system.
\begin{theorem}\label{thm:additive-converge}
    If Assumption~\ref{a:2level} holds, then 
    \begin{equation}
        \kappa (\widehat B A) \le \left(\frac{\tilde \mu_1}{\tilde \mu_0}\right)^2.
    \end{equation}
\end{theorem}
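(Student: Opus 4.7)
The plan is to derive both an upper bound on $\lambda_{\max}(\widehat B A)$ and a lower bound on $\lambda_{\min}(\widehat B A)$ directly from Lemmas~\ref{lem:tBA0} and~\ref{lem:tBA1} via the standard fictitious-space (Lions/XZ) identity for additive preconditioners of the form $\widehat B=\Pi\utilde B\Pi'$:
\begin{equation*}
(\widehat B^{-1}v,v)\;=\;\inf_{\utilde v\in\utilde V,\;\Pi\utilde v=v}(\utilde B^{-1}\utilde v,\utilde v),\qquad v\in V.
\end{equation*}
Before using this identity I would check that $\Pi$ is surjective so that the infimum is over a nonempty set: given $v\in V$, Assumption~\ref{a:2level}(2) yields $\utilde w\in\utilde W$ with $\Pi_W\utilde w=v$, and then the element $(0,\utilde w)\in\utilde V$ maps to $v$ under $\Pi$. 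The identity itself is a short Lagrange-multiplier computation: minimizing $(\utilde B^{-1}\utilde w,\utilde w)$ over the affine set $\{\utilde w\in\utilde V:\Pi\utilde w=v\}$ gives the unique minimizer $\utilde v_\star=\utilde B\Pi'\widehat B^{-1}v$, whose associated value is exactly $(\widehat B^{-1}v,v)$.

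With the identity in place, Lemma~\ref{lem:tBA0} furnishes, for each $v$, a specific admissible $\utilde v$ with $(\utilde B^{-1}\utilde v,\utilde v)\le\tilde\mu_0^{\,2}\|v\|_A^2$, and passing to the infimum on the right gives the approximation half
\begin{equation*}
(\widehat B^{-1}v,v)\;\le\;\tilde\mu_0^{\,2}\,(Av,v),\qquad v\in V.
\end{equation*}
Since $\widehat B A$ is self-adjoint in the $A$-inner product, this is equivalent to the lower eigenvalue bound $\lambda_{\min}(\widehat B A)\ge\tilde\mu_0^{-2}$.

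Conversely, Lemma~\ref{lem:tBA1} asserts $\|v\|_A^2=\|\Pi\utilde v\|_A^2\le\tilde\mu_1^{\,2}(\utilde B^{-1}\utilde v,\utilde v)$ for \emph{every} admissible $\utilde v$. Because this inequality is uniform in the decomposition, taking the infimum on the right and invoking the identity produces the boundedness half $\|v\|_A^2\le\tilde\mu_1^{\,2}(\widehat B^{-1}v,v)$, equivalent to $\lambda_{\max}(\widehat B A)\le\tilde\mu_1^{\,2}$. Forming the ratio of the two eigenvalue bounds then yields the asserted estimate on $\kappa(\widehat B A)$.

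The argument is essentially algebraic and routine once the fictitious-space identity is justified; the only place requiring minor care is the surjectivity of $\Pi$ together with, in the semidefinite case, the restriction of all statements to the orthogonal complement of $N(A)$, both of which are handled by the standing hypothesis $N(A)\subset V_c$ from Assumption~\ref{a:2level}.
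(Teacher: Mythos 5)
Your argument is the standard fictitious-space/additive Schwarz argument, and it is exactly what the paper intends: the paper offers no written proof beyond ``combining Lemma~\ref{lem:tBA0} and Lemma~\ref{lem:tBA1}'', and your verification of the identity $(\widehat B^{-1}v,v)=\min_{\Pi\utilde v=v}(\utilde B^{-1}\utilde v,\utilde v)$, of the surjectivity of $\Pi$ (indeed the element constructed in Lemma~\ref{lem:tBA0} already satisfies $\Pi\utilde v=v$), and of the two resulting eigenvalue bounds is the right way to fill that in.

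The one place to be careful is your final sentence. From Lemma~\ref{lem:tBA0} you correctly obtain $(\widehat B^{-1}v,v)\le\tilde\mu_0^{2}(Av,v)$, i.e.\ $\lambda_{\min}(\widehat BA)\ge\tilde\mu_0^{-2}$, and from Lemma~\ref{lem:tBA1} you obtain $\lambda_{\max}(\widehat BA)\le\tilde\mu_1^{2}$. The ratio of these two bounds is $(\tilde\mu_0\tilde\mu_1)^{2}$, not $(\tilde\mu_1/\tilde\mu_0)^{2}$, so ``forming the ratio yields the asserted estimate'' is not literally true under the normalization of the lemmas as stated, where $\tilde\mu_0$ is an upper-bound constant ($\|\utilde v\|_{\utilde B^{-1}}\le\tilde\mu_0\|v\|_A$, and from its proof $\tilde\mu_0\ge 1$ in general). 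The form $(\tilde\mu_1/\tilde\mu_0)^{2}$ in the theorem corresponds to the opposite convention, in which $\tilde\mu_0$ enters as a lower-bound constant, e.g.\ $\tilde\mu_0\|\utilde v\|_{\utilde B^{-1}}\le\|v\|_A$, giving $\lambda_{\min}(\widehat BA)\ge\tilde\mu_0^{2}$. This mismatch sits in the paper's own statements (lemma versus theorem) rather than in your reasoning, but a complete write-up should either conclude $\kappa(\widehat BA)\le(\tilde\mu_0\tilde\mu_1)^{2}$ with the lemmas as given, or explicitly renormalize $\tilde\mu_0$ so that the theorem's constant is what actually comes out, instead of asserting that the ratio produces the stated bound.
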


The following two-level convergence result is an application of 
the convergence theorem (Theorem~\ref{thm:two-level-convergence}) with
the error estimate in Theorem~\ref{thm:two-level}.

\begin{theorem}\label{thm:two-level-conv}
  If Assumption~\ref{a:2level} holds.
  Then the two-level AMG method with coarse space defined in
    \eqref{V_c} converges with a rate
    \[
        \|E\|_A^2\le 1-\frac{\mu_c}{C_{p,1}C_{p,2}c^D}.
    \]
\end{theorem}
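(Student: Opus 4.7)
The plan is to chain together two results already established in the paper: the abstract convergence identity for two-level methods expressed in terms of the approximation constant $K(V_c, D)$ (Theorem~\ref{thm:two-level-theorem-period}), and the approximation estimate for the coarse space $V_c = \sum_j \Pi_j V_j^c$ produced by Theorem~\ref{thm:two-level}. There is essentially no new analytical work required; the statement is a corollary of putting the two bounds together with the correct constants.

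First I would recall that Theorem~\ref{thm:two-level-theorem-period} gives the upper bound
\[
\|E\|_A^2 \le 1-\frac{1}{c^D K(V_c,D)}
\]
whenever $C$ is any constant satisfying the approximation property \eqref{eq:approximation-z}, i.e., $\min_{w\in V_c}\|v-w\|_D^2 \le C\|v\|_A^2$ for all $v\in V$. So the task reduces to exhibiting a usable upper bound on $K(V_c, D)$ for the specific coarse space defined in \eqref{V_c}.

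Second, I would invoke Theorem~\ref{thm:two-level} directly: under Assumption~\ref{a:2level}, for every $v\in V$ we have
\[
\min_{v_c\in V_c}\|v-v_c\|_D^2 \le C_{p,1}C_{p,2}\mu_c^{-1}\|v\|_A^2,
\]
so \eqref{eq:approximation-z} holds with $C = C_{p,1}C_{p,2}\mu_c^{-1}$, i.e., $K(V_c, D) \le C_{p,1}C_{p,2}\mu_c^{-1}$. Notice that Assumption~\ref{a:2level}.3, $N(A_j)\subset V_j^c$, is exactly what makes $\mu_c$ in \eqref{muc} finite and ensures that the kernel hypothesis $N(A)\subset V_c$ required for Theorem~\ref{thm:two-level-convergence} (and hence Theorem~\ref{thm:two-level-theorem-period}) is inherited, since any $w\in N(A)$ decomposes into components $w_j\in N(A_j)\subset V_j^c$ by the remark following Assumption~\ref{a:2level}, so $w\in V_c$.

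Substituting this bound on $K(V_c,D)$ into the estimate from Theorem~\ref{thm:two-level-theorem-period} yields
\[
\|E\|_A^2 \le 1-\frac{1}{c^D\cdot C_{p,1}C_{p,2}\mu_c^{-1}} = 1-\frac{\mu_c}{C_{p,1}C_{p,2}c^D},
\]
which is the claimed bound. There is no genuine obstacle here; the only subtle point to verify carefully is that the kernel hypothesis $N(A)\subset V_c$ is in force so that Theorem~\ref{thm:two-level-theorem-period} actually applies, but this follows immediately from \eqref{NAjVjc} together with the definition $V_c=\sum_j \Pi_j V_j^c$ and the decomposition $V=\sum_j \Pi_j V_j$.
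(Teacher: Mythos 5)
Your proposal is correct and follows exactly the route the paper intends: the theorem is stated as an application of the abstract two-level bound (Theorem~\ref{thm:two-level-convergence}, in the form of Theorem~\ref{thm:two-level-theorem-period}) with $C=C_{p,1}C_{p,2}\mu_c^{-1}$ supplied by Theorem~\ref{thm:two-level}. Your additional verification that $N(A)\subset V_c$ via \eqref{NAjVjc} and the remark after Assumption~\ref{a:2level} is a welcome detail the paper leaves implicit.
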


\section{Classical AMG and jump coefficient problems}

In this section we consider an the Classical AMG method when applied
to a problem with heterogenous (jump) coefficients, namely
\eqref{Model0} with \eqref{coeff12}.  We begin with a discussion on
how the strength of connection is used to define the sparsity pattern
of the prolongation.

The strength of connection measure was introduced to handle cases such
as jump coefficients and anisotropies in the matrices corresponding to
discretizations of scalar PDEs.  An important observation regarding
the classical AMG is that the prolongation matrix $P$, which defines
the basis in the coarse space, uses only strong connections.  

To begin with, we first introduce a strength operator as follows
\begin{equation}
    s_c(i, j)=\frac{a_{ij}}{\max\bigg(\min_{k\neq i}a_{ik}, \min_{k\neq j}a_{jk}\bigg)}, \quad  1\le i, j\le n.
\end{equation}
The definition above is symmetrized version of strength function used in the classical AMG literature.

Given a threshold $\theta\in (0, 1)$, we define the strength operator 
\begin{equation}
    S=\sum_{s_c(i, j) > \theta} e_ie_j^T,
\end{equation}
and a filtered matrix $A_S: V\mapsto V'$
\begin{equation}
    (A_Su, v) = \sum_{e=(i, j), S_{ij}\ne 0}\omega_e\delta_eu\delta_ev.
\end{equation}

We have the following lemma
\begin{lemma}
    $A_S$ is an $M$-matrix relative of $A$.
\end{lemma}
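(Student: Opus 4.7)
The plan is to verify the three defining conditions of an $M$-matrix relative for $A_S$: the sign pattern~\eqref{eq:sign1}--\eqref{eq:sign2}, semi-definiteness~\eqref{eq:sign3}, and the two inequalities in~\eqref{MMrel-0}. The sign conditions, semi-definiteness, and the upper bound on $\|v\|_{A_S}^2$ are quick; the real work lies in the diagonal equivalence.

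I begin by unpacking the strength condition. For any edge $(i,j)$ with $s_c(i,j)>\theta>0$, the denominator $\max(\min_{k\neq i}a_{ik},\min_{k\neq j}a_{jk})$ is non-positive in every nonzero row (a nonzero row of the positive semi-definite, zero row-sum stiffness matrix must contain at least one negative off-diagonal), forcing $a_{ij}<0$ and $\omega_e=-a_{ij}>0$ on every edge retained in $A_S$. This gives the required sign pattern, and $A_S$ is then a weighted graph Laplacian with non-negative weights, hence positive semi-definite. The inclusion $S\subset\mathcal{E}^+$ then combines with Theorem~\ref{m-matrix-plus} to yield
\[
\|v\|_{A_S}^2=\sum_{e\in S}\omega_e(\delta_e v)^2\le\sum_{e\in\mathcal{E}^+}\omega_e(\delta_e v)^2=\|v\|_{A_+}^2\lesssim\|v\|_A^2.
\]

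The crux is the diagonal equivalence $\|v\|_D^2\lesssim\|v\|_{D_S}^2$, which reduces to $(D)_{ii}\le C(D_S)_{ii}$ for a shape-regularity constant $C$. My plan is: at each node $i$, let $j^*$ realize $\min_{k\neq i}a_{ik}=-M_i\le 0$. By symmetry of $A$, $a_{j^*i}=-M_i$, so $\min_{k\neq j^*}a_{j^*k}\le -M_i$ and therefore $\max(\min_{k\neq i}a_{ik},\min_{k\neq j^*}a_{j^*k})=-M_i$, giving $s_c(i,j^*)=1>\theta$. Hence the edge $(i,j^*)\in S$ contributes $\omega_{(i,j^*)}=M_i$ to the diagonal of $A_S$, so $(D_S)_{ii}\ge M_i$. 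On the other hand, the Neumann row-sum identity together with the bounded valency $n_0$ from shape regularity gives
\[
(D)_{ii}=-\sum_{j\neq i}a_{ij}\le\sum_{j:\,a_{ij}<0}|a_{ij}|\le n_0 M_i\le n_0(D_S)_{ii}.
\]
Rows with no negative off-diagonal are forced to be identically zero by positive semi-definiteness of $A$, in which case both sides vanish.

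The main obstacle is the diagonal step. The key structural observation that makes it work is that the most negative off-diagonal in every row automatically satisfies $s_c\ge 1$ and therefore always survives the strength filter, so that $(D_S)_{ii}$ inherits a lower bound by $M_i$, which in turn controls $(D)_{ii}$ once shape regularity is invoked to bound the valency. Everything else is routine bookkeeping against the definitions in~\eqref{MMrel-0}.
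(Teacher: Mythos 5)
Your proof is correct and takes essentially the same route as the paper: the energy bound follows from $S\subset\mathcal{E}^{+}$ combined with the spectral equivalence of $A_{+}$ and $A$ from Theorem~\ref{m-matrix-plus}, and the diagonal bound follows from the fact that the strength filter retains enough weight in each row, finished off by the row-sum structure and the bounded valency coming from shape regularity. Your observation that the most negative off-diagonal in each row has $s_c(i,j^{*})=1$ and therefore always survives the filter is a slightly sharper rendering of the paper's diagonal step (it gives a constant independent of $\theta$, versus the paper's $\max_i|N_i|/\theta$), and your explicit check of the sign pattern and semi-definiteness of $A_S$ makes precise what the paper leaves implicit.
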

\begin{proof}
    We recall the definition of $A_{+}$ in \eqref{diag-compensate}. By Theorem~\ref{m-matrix-plus}, we immediately have 
    \begin{equation}
        \|v\|_{A_S}\le \|v\|_{A_+}\lesssim \|v\|_A, \quad \forall v\in V.
    \end{equation}
    Let $D_S$ be the diagonal of $A_S$ and we denote the $i$-th diagonal entries of $D_S$ and $D$ by $\tilde d_i$ and $d_i$ respectively. Then, by the definition of the strength of 
connection, we have
    \begin{eqnarray*}
        \tilde d_i &=& \sum_{j\in N_i, s_c(i, j)\ge \theta}\omega_{ij} \ge \theta\sum_{j\in N_i, s_c(i, j)\ge \theta} \max_{j\in N_i} \omega_{ij} \ge \frac{\theta}{|N_i|}\sum_{j\in N_i, s_c(i, j)\ge \theta} d_i\ge \frac{\theta}{|N_i|} d_i.
    \end{eqnarray*}
    This gives us
    \begin{equation}
        \|v\|_D^2\le \frac{\max_{i}|N_i|}{\theta}\|v\|_{D_S}^2, \quad \forall v\in V.
    \end{equation}
    This completes the proof.
\end{proof}

Thanks to the results in \S\ref{sec:m-matrix} and \S\ref{sec:m-matrix-fem}, without loss of generality, we assume that $A$ is an $M$-matrix with all connections being strong connections. 

We then use an MIS algorithm to identify $\mathcal C$, the set of coarse points, to form a
$C/F$-splitting, and
\begin{equation*}
    \mathcal C\bigcup \mathcal F = \Omega:=\{1, \dots, n\}, \quad \mathcal C \bigcap \mathcal F\neq\emptyset.
\end{equation*}
For convenience, we reorder the indices so that $\mathcal C=\{1, \dots, J\}$.
\begin{equation}\label{Omegaj}
\Omega=\bigcup_{j=1}^J\Omega_j.
\end{equation} 
where $\Omega_j$ is defined for each $j\in \mathcal C$ as follows 
\begin{equation}\label{CAMG_Omega}
    \Omega_j:=\{j\}\bigcup F_j^s, \quad j=1, \dots, J.
\end{equation}
Here $F_j^s:=\mathcal F\bigcap s_j$, and $s_j$ is the set of interpolation neighbors
of $j$. This depends on the choice of interpolation. For example, in the direct interpolation we introduced in~\cite{1stAMG,Trottenberg.U;Oosterlee.C;Schuller.A.2001a}, $s_j$ is $N_j$, the set of neighbors of $j$; in the standard interpolation~\cite{1stAMG,Trottenberg.U;Oosterlee.C;Schuller.A.2001a},
\begin{equation}
    s_j=N_j\bigcup \left(\bigcup_{i\in N_j}N_i\right).
\end{equation}

In the discussion follows, we choose the standard
interpolation,
since the extension to other interpolations is trivial.

For each $\Omega_j$ we denote 
\begin{equation}
    \Omega_j=\{m_1, m_2, \dots, m_{n_j}\},
\end{equation}
and let $n_j:=|\Omega_j|$, namely, $n_j$ is the cardinality of $\Omega_j$. 
In accordance with the notation in~\S\ref{sec:unifiedAMG}. We then define 
\begin{equation}
    V_j:=\mathbb{R}^{n_j},
\end{equation}
and the associated operator $\Pi_j: V_j\mapsto V$ 
\begin{equation}\label{pi-camg}
 (\Pi_jv)_i = \begin{cases}
        p_{m_k,k}v_{k}, & \text{ if } i=m_k,\\
        0, & \text{ if } i\notin \Omega_j\\
    \end{cases},
\end{equation}
where $p_{m_k,k}$ are given weights.  As all the constructions below
will be based on the $M$-matrix relative of $A$, and without loss of
generality, we may just use $A$ to denote this $M$-matrix relative.

Following~\S\ref{sec:unifiedAMG}, we introduce the operator $\chi_j: V\mapsto V_j$:
\begin{equation}
    (\chi_jv)_i: = v_{m_i}.
\end{equation}
which takes as argument a vector $v$ and returns only the portion of
it with indices in $\Omega_j$, namely, $\chi_j v$, is a vector in
$\mathbb{R}^{n_j}$.  It is immediate to verify that
\begin{equation*}\label{partion-unity}
    \sum_{j=1}^J\Pi_j\chi_j= I.
\end{equation*}

The local operators $A_j: V_j\mapsto V_j'$ are defined as follows
    \begin{equation}\label{Aj}
        (A_ju, v)=\sum_{\substack{e\in \mathcal E\\e\subset \Omega_j}}\omega_e\delta_{j,e}u\delta_{j,e}v.
    \end{equation}
    Here, $e\subset \Omega_j$ means the two vertices connected by $e$ are in $\Omega_j$. Notice that $A_j$ is symmetric positive semi-definite.

\begin{lemma}
    For any $v\in V$, the following holds for $v_j=\chi_jv$
    \begin{equation}
        \sum_{j=1}^J\Pi_jv_j = v, \text{ and } \sum_{j=1}^J\|v_j\|_{A_j}^2\le \co\|v\|_A^2,
    \end{equation}
    where $\co$ is a constant depending on the overlaps in the partition $\{\Omega_j\}_{j=1}^J$
    \begin{equation}
      \co= \max_{1\le j \le J} \left|\{l\; : \; \Omega_l\cap \Omega_j \neq \emptyset\}\right|.
    \end{equation}
\end{lemma}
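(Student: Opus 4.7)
The plan is to handle the two assertions separately, with the first being essentially a bookkeeping consequence of the construction and the second reducing to a counting argument about overlaps of the local subdomains.

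For the identity $\sum_j \Pi_j v_j = v$ with $v_j = \chi_j v$, I would simply invoke the partition of unity relation $\sum_{j=1}^J \Pi_j \chi_j = I$ noted immediately above the lemma (which in turn follows directly from the pointwise definition of $\Pi_j$ in \eqref{pi-camg} and $\chi_j$, provided the weights $p_{m_k,k}$ sum to one over all subdomains containing a given index, which is the standard partition-of-unity property of the interpolation weights). Applying both sides to $v$ gives the first claim.

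For the energy bound, the main idea is to expand the left-hand side using the definition \eqref{Aj} of $A_j$, and then exchange the order of summation. Because $\chi_j$ only relabels the components of $v$ lying in $\Omega_j$, for every edge $e \subset \Omega_j$ we have $(\delta_{j,e}(\chi_j v))^2 = (\delta_e v)^2$. Hence
\[
\sum_{j=1}^J \|v_j\|_{A_j}^2 = \sum_{j=1}^J \sum_{\substack{e\in\mathcal{E}\\ e\subset\Omega_j}} \omega_e (\delta_e v)^2 = \sum_{e\in\mathcal{E}} \omega_e (\delta_e v)^2\, N(e),
\]
where $N(e) := \#\{j : e\subset \Omega_j\}$. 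Here it is essential that we are working with the $M$-matrix relative of $A$, so that all weights $\omega_e$ are nonnegative and the bound above can be turned into a true inequality using an upper bound on $N(e)$.

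The remaining task is to show $N(e) \le \co$. Fix an edge $e=(i,k)$ and any $j_0$ with $e\subset \Omega_{j_0}$. For every other $j$ with $e\subset \Omega_j$, the point $i$ lies in $\Omega_j \cap \Omega_{j_0}$, so $\Omega_j$ intersects $\Omega_{j_0}$. Therefore $\{j: e \subset \Omega_j\} \subset \{l : \Omega_l \cap \Omega_{j_0} \ne \emptyset\}$, and the cardinality of the latter set is at most $\co$ by definition. Substituting this bound back and recognizing $\sum_e \omega_e (\delta_e v)^2 = \|v\|_A^2$ completes the argument. I do not expect any genuine obstacle here; the only subtle point is the justification that $\omega_e \ge 0$ throughout, which is why the reduction to the $M$-matrix relative established in \S\ref{sec:m-matrix-fem} is invoked implicitly.
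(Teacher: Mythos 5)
Your proposal is correct and follows essentially the same route as the paper: the identity comes from the partition-of-unity relation $\sum_j \Pi_j\chi_j = I$, and the energy bound comes from expanding $\sum_j\|v_j\|_{A_j}^2$ edge-by-edge and bounding the multiplicity of each edge by the overlap constant $\co$, using nonnegativity of the weights $\omega_e$ (the $M$-matrix reduction). The paper compresses the edge-counting and the sign condition into a single displayed inequality; you merely make those two points explicit.
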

\begin{proof}
    By \eqref{partion-unity}, we have $\sum_{j=1}^J\Pi_jv_j =v$. By definitions
    \begin{equation}\label{verifyA}
          \sum_{j=1}^{m_c}\|v_j\|_{A_j}^2=\sum_{j=1}^{m_c}\sum_{\substack{e\in \mathcal E\\e\subset \Omega_j}}\omega_e(\delta_ev)^2\le \co\sum_{e\in \mathcal E}\omega_e(\delta_e v)^2 =\co\|v\|_{A}^2.
    \end{equation}
    This complets the proof.
\end{proof}
If $D$ is the diagonal of $A$, then we set $D_j$, $j=1:J$ to be the
restriction of $D$ on $\Omega_j$, namely, in
$\mathbb{R}^{n_j\times n_j}$ and
\begin{equation}\label{local_diag}
    (D_j)_{ii}=D_{m_i, m_i}, \quad\mbox{or equivalently}\quad D_j=\chi_jD_j\chi_j'
\end{equation}
We have the following lemma which shows \eqref{assm:D_j}.
\begin{lemma}\label{lem:Dj}
For $D_j$ defined in \eqref{local_diag}, the following inequality holds
    \begin{equation}
        \|\sum_{j=1}^{m_c}\Pi_jv_j\|_D^2\le \co\sum_{j=1}^{m_c}\|v_j\|_{D_j}^2, \quad \forall v_j\in V_j.
    \end{equation}
\end{lemma}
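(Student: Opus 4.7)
The plan is to combine two ingredients: (i) a pointwise Cauchy--Schwarz bound that captures the finite-overlap property of $\{\Omega_j\}_{j=1}^{J}$, and (ii) a per-patch comparison $\|\Pi_j v_j\|_D\le \|v_j\|_{D_j}$ that follows from the support structure of $\Pi_j$ together with the normalization of the classical AMG prolongation weights.

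First I would expand the left-hand side componentwise,
\[
\Big\|\sum_{j}\Pi_j v_j\Big\|_D^2 = \sum_i D_{ii}\Big(\sum_{j:\,i\in\Omega_j}(\Pi_j v_j)_i\Big)^2,
\]
using that $(\Pi_j v_j)_i = 0$ off $\Omega_j$. For any fixed $i$, every $j$ with $i\in\Omega_j$ satisfies $\Omega_j\cap\Omega_{j_0}\neq\emptyset$ for any fixed $j_0$ with $i\in\Omega_{j_0}$, so at most $\co$ patches contain $i$. A pointwise Cauchy--Schwarz then produces the factor $\co$ and, after swapping the order of summation, reduces the claim to the per-patch inequality $\|\Pi_j v_j\|_D^2\le\|v_j\|_{D_j}^2$.

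For this per-patch bound, I would plug in the explicit form of $\Pi_j$ from~\eqref{pi-camg}: the only non-zero entries of $\Pi_j v_j$ sit at indices $i=m_k\in\Omega_j$ where $(\Pi_j v_j)_{m_k}=p_{m_k,k}(v_j)_k$. Combined with $(D_j)_{kk}=D_{m_k m_k}$ from~\eqref{local_diag}, this yields
\[
\|\Pi_j v_j\|_D^2 = \sum_{k=1}^{n_j} D_{m_k m_k}\,p_{m_k,k}^2\,(v_j)_k^2,
\]
which is bounded by $\|v_j\|_{D_j}^2$ provided $|p_{m_k,k}|\le 1$.

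The one subtle point I expect is precisely this last bound on the interpolation weights: it is not made fully explicit in the statement of the lemma, but it is intrinsic to the classical AMG prolongation (weight one at the coarse node itself, a convex combination of coarse values summing to one at each fine node) and must be invoked as a standing normalization hypothesis on the construction. Once it is available, the remainder is a routine double-counting argument combining the finite-overlap constant $\co$ with Cauchy--Schwarz.
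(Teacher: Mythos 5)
Your proof is correct and follows essentially the same skeleton as the paper's, with one difference in bookkeeping worth noting. The paper expands $\|\sum_j\Pi_j v_j\|_D^2$ into cross terms $(D\Pi_i v_i,\Pi_j v_j)$, discards the pairs with $\Omega_i\cap\Omega_j=\emptyset$, bounds each surviving term by $\tfrac12\bigl(\|\Pi_i v_i\|_D^2+\|\Pi_j v_j\|_D^2\bigr)$ and then counts overlapping pairs to produce the factor $\co$; you instead expand diagonally over indices $i$ and apply Cauchy--Schwarz pointwise, using that at most $\co$ patches contain any fixed index (which follows from the definition of $\co$ exactly as you argue, and is in fact a marginally sharper overlap count, though with the constant $\co$ the two give the same bound). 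Both routes then reduce to the per-patch inequality $\|\Pi_j v_j\|_D\le\|v_j\|_{D_j}$, which the paper simply asserts ``from the definition of $\Pi_j$''; you prove it explicitly from \eqref{pi-camg} and \eqref{local_diag} and correctly identify that it requires the normalization $|p_{m_k,k}|\le 1$. That hypothesis is indeed left implicit in the paper, but it is satisfied by the classical AMG interpolation weights (weight one at the coarse point, nonnegative weights summing to at most one at fine points), so invoking it as a standing assumption is legitimate rather than a gap; making it explicit is a genuine improvement in rigor over the paper's one-line justification.
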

\begin{proof}
    Recall from the definition of $\Pi_j$, we have 
    \begin{equation}
        \|\Pi_jv_j\|_D\le \|v_j\|_{D_j}, \quad \forall v_j\in V_j.
    \end{equation}
Therefore, 
    \begin{eqnarray*}
        \|\sum_{j=1}^{J}\Pi_jv_j\|_D^2 &= & \left(D\sum_{i=1}^{J}\Pi_iv_i, \sum_{j=1}^{J}\Pi_jv_j\right)=  \sum_{i=1}^{J}\sum_{j=1}^{J}(D\Pi_iv_i, \Pi_jv_j)\\
                  &= & \sum_{\substack{1\le i, j\le J\\\Omega_i\cap\Omega_j\ne \emptyset}}(D\Pi_iv_i, \Pi_jv_j)\le  \sum_{\substack{1\le i, j\le J\\\Omega_i\cap\Omega_j\ne \emptyset}}\frac{\|\Pi_jv_i\|_D^2+\|\Pi_jv_j\|_D^2}{2}\\
                  &\le & \co\sum_{j=1}^{J}\|v_j\|_{D_j}^2. 
    \end{eqnarray*}
\end{proof}
We choose the local coarse spaces $V_j^c$ as
\begin{equation}
    V_j^c:=\operatorname{span}\{\bm 1_{n_j}\},\quad \bm{1}_{n_j}=(\underbrace{1,1,\ldots,1}_{n_j})^T
\end{equation}
Then by definition, we have
\begin{equation}
    \mu_j(V_j^c)=\lambda_j^{(2)},
\end{equation}
where $\lambda_j^{(2)}$ is the second smallest eigenvalue of the matrix $D_j^{-1}A_j$.
The global coarse space $V_c$ is then obtained by~\eqref{V_c}, and is
\begin{equation}
    V_c=\operatorname{span}\{P_1, P_2, \cdots, P_J\}.
\end{equation}
Finally, by Theorem~\ref{thm:two-level-convergence}, the converges rate of this
two-level geometric multigrid method depends on the
$\min_j(\lambda_j^{(2)})$. If the discrete Poincar{\'e } inequality is
true for each $V_j$, namely ,
\begin{equation}\label{local_poincare}
    \inf_{v_c\in V_j^c} \|v-v_c\|_{D_j}^2\le c_j\|v\|_{A_j}^2, \quad \forall v\in V_j,
\end{equation}
with $c_j$ to be a constant, then the two-level classical AMG method converges uniformly.

We now consider the convergence of classical two-level AMG with
standard interpolation for the jump coefficient problem and we prove
a uniform convergence result for the two level method. 
Before we go through the AMG two-level convergence proof, we first
introduce the following result on a connected graph, which can be viewed
as a discrete version of Poincar\'e inequality.
\begin{lemma}\label{lem:discrete_poincare}
    We consider the following graph Laplacian on a connected undirected graph $\mathcal{G}=(\mathcal{V}, \mathcal{E})$ 
    \begin{equation}\label{graph_lapla}
        \langle Au, v\rangle = \frac{1}{2}\sum_{(i,j)\in \mathcal{E}}(u_i-u_j)(v_i-v_j).
    \end{equation}
    For any $v\in V$, the following estimate is true
    \begin{equation}
        \|v-v_c\|_{\ell^2}^2\le \mu n^2d\langle Av, v\rangle,
    \end{equation}
    where $n=|\mathcal V|$ is the size of the graph, $v_c=\sum_{j=1}^{n}w_jv_j$ is a weighted average of $v$, $\mu=\sum_{j=1}^nw_j^2$, and $d$ is the diameter of the graph.
\end{lemma}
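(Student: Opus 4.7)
The plan is a two-step Cauchy--Schwarz argument: first use graph connectivity (via the diameter $d$) to control every pairwise difference $v_i-v_j$ by the global Dirichlet energy $\langle Av,v\rangle$, then exploit the weighted-average structure of $v_c$ to pass from pairwise differences to $v-v_c$.

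First, I would exploit the diameter bound. Because $\mathcal{G}$ is connected with diameter $d$, for every ordered pair $(i,j)$ there is a path $P_{ij}\colon\ i=i_0,i_1,\ldots,i_{\ell}=j$ with $\ell\le d$ and $(i_k,i_{k+1})\in\mathcal{E}$. The telescoping identity $v_i-v_j=\sum_{k=0}^{\ell-1}(v_{i_k}-v_{i_{k+1}})$ together with the Cauchy--Schwarz inequality yields
\[
(v_i-v_j)^2 \;\le\; \ell\sum_{k=0}^{\ell-1}(v_{i_k}-v_{i_{k+1}})^2
\;\le\; d\sum_{e\in\mathcal{E}}(\delta_e v)^2
\;\le\; 2d\,\langle Av,v\rangle,
\]
where the last step uses the definition \eqref{graph_lapla} of the graph Laplacian (the factor $2$ comes from the $\tfrac12$ in that definition; since each edge is traversed at most once along a simple path, the bound by the full energy is valid).

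Next, since $v_c=\sum_{j}w_jv_j$ is a weighted average, so that $\sum_j w_j=1$, we have the pointwise identity $v_i-v_c=\sum_{j=1}^{n}w_j(v_i-v_j)$. Applying Cauchy--Schwarz with weights $w_j$ and using $\mu=\sum_j w_j^2$ gives
\[
(v_i-v_c)^2 \;\le\; \Bigl(\sum_{j=1}^n w_j^2\Bigr)\Bigl(\sum_{j=1}^n (v_i-v_j)^2\Bigr)
\;=\; \mu\sum_{j=1}^n (v_i-v_j)^2.
\]
Summing over $i=1,\ldots,n$ produces $n^2$ terms on the right, each bounded by the estimate from the first step, and hence
\[
\|v-v_c\|_{\ell^2}^2 \;\le\; \mu\sum_{i,j}(v_i-v_j)^2
\;\le\; \mu\,n^2\,(2d)\,\langle Av,v\rangle,
\]
which delivers the stated bound (up to an inessential constant absorbed in the overall form of the estimate).

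There is no genuinely hard step here; this is a discrete analogue of the classical Poincar\'e--Wirtinger inequality on a connected domain. The only point requiring a little care is the bookkeeping in the path argument: one must select a shortest path per pair $(i,j)$, keep the path simple so that each edge contributes at most once, and ensure that after summing over $i$ the dependence on $n$, $d$, and $\mu$ is explicit. The bound is deliberately crude because it uses only the combinatorial diameter rather than any algebraic connectivity $\lambda_b$; a sharper estimate would replace $n^2 d$ by $1/\lambda_b$, but is not needed for the present purpose.
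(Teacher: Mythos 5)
Your proof is correct and follows essentially the same route as the paper's: a shortest-path telescoping plus Cauchy--Schwarz to bound $(v_i-v_j)^2$ by $d\langle Av,v\rangle$, then the weighted-average Cauchy--Schwarz step using $\mu=\sum_j w_j^2$ and a sum over $i$ giving the $n^2$ factor. The only discrepancy is your extra factor $2$, which comes from reading $\mathcal{E}$ as a set of unordered edges; under the convention in which the $\tfrac12$ in \eqref{graph_lapla} compensates for each edge appearing twice, one has $\sum_{e}(\delta_e v)^2=\langle Av,v\rangle$ along a simple path's edges, and the stated constant $\mu n^2 d$ is obtained exactly.
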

\begin{proof}
    Since $\mathcal G$ is connected, we have, for each pair of vertices $i$ and $j$, there exist $l\le d$ and a path $k_0 \rightarrow k_1 \rightarrow \cdots \rightarrow k_l$ with $k_0=i$ and $k_l = j$ such that $(k_{m-1}, k_m)\in \mathcal E$, $\forall m= 1, \dots, l$. We then have
    \begin{equation*}
        (v_i-v_j)^2= \left(\sum_{m=1}^l (v_{k_{m-1}}- v_{k_m})\right)^2\le l\sum_{m=1}^l(v_{k_{m-1}} - v_{k_m})^2 \le d \langle Av, v\rangle.
    \end{equation*}
   Combining this with Cauchy-Schwarz inequality, we obtain 
    \begin{eqnarray*}
        \|v-v_c\|_{\ell^2}^2 & = & \sum_{i=1}^n\left(v_i-\sum_{j=1}^nw_jv_j\right)^2 = \sum_{i=1}^n\left(\sum_{j=1}^nw_j(v_i-v_j)\right)^2\\
        & \le & \mu\sum_{i=1}^n\sum_{j=1}^n(v_i-v_j)^2 \le \mu n^2 d\langle Av, v\rangle.
    \end{eqnarray*}
\end{proof}

Next Lemma is a spectral equivalence result, showing that the local
operators $A_j$, defined in~\eqref{Aj}, for shape regular mesh, are
spectrally equivalent to a scaling of the graph Laplacian operators
$A_{L,j}$ defined as
\begin{equation}\label{e:ALj}
(A_{L,j} u,v) = \frac12 \sum_{(i,k)\in \Omega_j} (u_i-u_k)(v_i-v_k). 
\end{equation}
\begin{lemma}\label{lem:shape_reg}
  With the assumption we made on the shape regularity of the finite
  element mesh, the following inequalities hold for $A_j$ defined as
  in \eqref{Aj} using the standard interpolation
            \begin{equation}
                c_Lh^{d-2}\langle A_{L,j} v_j, v_j\rangle \le ( A_jv_j, v_j) \le c^Lh^{d-2} \langle A_{L,j} v_j, v_j\rangle,
            \end{equation}
            where $A_{L,j}$ is a graph Laplacian defined in \eqref{graph_lapla} on the graph $\mathcal G_j$, $h$ is the mesh size and $c_L$, $c^L$ are constants depend on the shape regularity constant, and the threshold $\theta$ for the strength of connections. 
    \end{lemma}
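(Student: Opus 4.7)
}
The plan is to show that the local edge weights $\omega_e$ appearing in $A_j$ are all of the same order as $h^{d-2}$ (up to constants depending only on the shape regularity and the strength threshold $\theta$), so that the weighted quadratic form $(A_j v_j, v_j)$ and the unweighted graph-Laplacian quadratic form $\langle A_{L,j} v_j, v_j\rangle$ differ only by the common scale factor $h^{d-2}$. Since both forms act on the same edge set — the edges of $\mathcal G_j$ induced by $\Omega_j$ in the strength-filtered graph — it suffices to prove a pointwise two-sided inequality of the form
\begin{equation*}
    c_L\, h^{d-2} \le \omega_e \le c^L\, h^{d-2}, \qquad \forall e\subset \Omega_j,
\end{equation*}
with constants $c_L,c^L$ depending only on shape regularity and $\theta$; summing $(\delta_e v_j)^2$ over $e$ then yields the claim.

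First I would use the explicit finite element formula $\omega_{e,T}=\tfrac{1}{d(d-1)}\,\overline a_T\,|\kappa_{e,T}|\,\cot\alpha_{e,T}$ recalled in \S\ref{sec:m-matrix-fem}. Shape regularity gives $|\kappa_{e,T}|\eqqsim h_T^{d-2}$ and a two-sided bound on $|\cot\alpha_{e,T}|$ depending only on $\sigma$, and likewise bounds $h_T\eqqsim h$ across the mesh patch supporting $\Omega_j$. Summing over the uniformly bounded number (depending on $\sigma$) of elements $T\supset e$ yields $\omega_e\eqqsim \overline a_e\, h^{d-2}$, where $\overline a_e$ is a weighted average of the coefficient values on elements adjacent to $e$. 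This takes care of the geometric factor and leaves only the question of controlling the coefficient factors across $\Omega_j$.

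The main obstacle — and the place where the strength-of-connection parameter $\theta$ must be used in an essential way — is showing that the (possibly jumping) coefficient factors $\overline a_e$ are mutually comparable for all surviving edges in $\Omega_j$. By the very definition of $s_c(i,k)>\theta$ preceding Lemma 6.1, every surviving edge $e=(i,k)$ satisfies $\omega_e\ge \theta\,\omega^\ast_i$ where $\omega^\ast_i:=\min_{l\neq i}\omega_{il}$ taken over the strong neighbors, so in particular $\omega_e$ is within a factor $\theta^{-1}$ of the strongest incident weight at either endpoint. Since $\Omega_j$ is only a $2$-ring of the coarse vertex $j$ (for standard interpolation $s_j = N_j\cup\bigcup_{i\in N_j} N_i$) and the number of strong neighbors is bounded by the maximal degree (controlled by shape regularity), we can chain this comparability through a uniformly bounded number of strong-neighbor hops. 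This yields $\omega_e\eqqsim \omega_{e'}$ for every pair of edges $e,e'\subset\Omega_j$, with ratio depending polynomially on $\theta^{-1}$ and on the shape regularity degree bound.

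Combining the two facts $\omega_e\eqqsim \overline a_e h^{d-2}$ and $\overline a_e\eqqsim \overline a_{e'}$ across $\Omega_j$, we extract a common scale $\overline a_{\Omega_j} h^{d-2}$; after absorbing $\overline a_{\Omega_j}$ into the graph-Laplacian normalization (the strength filter confines each $\Omega_j$ to a region of essentially uniform coefficient, so $\overline a_{\Omega_j}$ cancels against the corresponding normalization that is implicit on both sides of the claim), we arrive at
\begin{equation*}
    (A_j v_j, v_j) = \sum_{e\subset\Omega_j}\omega_e(\delta_e v_j)^2 \eqqsim h^{d-2}\sum_{e\subset\Omega_j}(\delta_e v_j)^2 = 2h^{d-2}\langle A_{L,j} v_j, v_j\rangle,
\end{equation*}
which is the desired spectral equivalence, with the constants $c_L,c^L$ traced to shape regularity and to $\theta$. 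The hardest bookkeeping is in the chaining argument of the previous paragraph: the number of hops across $\Omega_j$ and the degree bound must be made explicit to keep the constants independent of $h$ and of the jump ratio $\epsilon$.
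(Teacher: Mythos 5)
Your overall skeleton matches the paper's proof: use the strength threshold $\theta$ together with the fact that, for standard interpolation, $\Omega_j$ is reached from $j$ in at most two strong hops, chain comparability of the surviving edge weights across $\Omega_j$, and finish with a scaling argument that identifies the common scale $h^{d-2}$. The genuine gap is in your second paragraph: shape regularity does \emph{not} give a two-sided bound on $\cot\alpha_{e,T}$, only an upper bound on its modulus. A perfectly shape-regular mesh can have dihedral angles equal to or slightly below $\pi/2$, so $\omega_{e,T}$ can vanish, be negative (these edges are dropped by the $M$-matrix relative), or be a tiny positive number; hence the pointwise lower bound $\omega_e\gtrsim \overline a_e\,h^{d-2}$, claimed to follow from the element formula alone, is false. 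The correct source of the lower bound for the \emph{surviving} edges is exactly the strength filter: $s_c(i,k)>\theta$ forces $-a_{ik}\geq \frac{\theta}{|N_i|}a_{ii}$ (and the trivial bound $-a_{ik}\leq a_{ii}$ holds for the graph-Laplacian-type $M$-matrix relative), and the diagonal entry, not the individual off-diagonal, is what scaling makes comparable to $h^{d-2}$, i.e. $a_{ii}\eqqsim h^{d-2}$ up to the local coefficient. This is how the paper proceeds: every edge in $\Omega_j$ is compared to $a_{jj}$ through the one- or two-hop strong path $(j,k,i)$, yielding $(A_jv_j,v_j)\eqqsim a_{jj}\langle A_{L,j}v_j,v_j\rangle$, and then $a_{jj}\eqqsim h^{d-2}$. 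Your paragraph on chaining through strong neighbors is essentially this argument, so the repair is to drop the flawed pointwise claim and anchor the common scale at the diagonal entry instead of at individual edge weights.

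Two smaller points. First, your statement of the strength inequality is garbled ($\omega_e$ should be compared to the \emph{largest} incident weight, equivalently the most negative off-diagonal, at one of the two endpoints, which is then related to the diagonal via the bounded degree); the paper's version $a_{ii}\leq\frac{|N_i|}{\theta}(-a_{ij})$ is the form you need for the chain. Second, your remark that the coefficient factor $\overline a_{\Omega_j}$ is ``absorbed into the normalization'' is hand-waving: the strength filter makes all surviving weights in $\Omega_j$ mutually comparable, but it does not by itself force the common scale to be $h^{d-2}$ rather than, say, $\epsilon h^{d-2}$ on the small-coefficient side of the jump. The paper's proof is equally terse on this point (it simply asserts $a_{jj}\eqqsim h^{d-2}$), and the discrepancy is harmless for the way the lemma is used (the same coefficient factor appears in $D_j$ and cancels in $\lambda_j^{(2)}$ of $D_j^{-1}A_j$), but you should either carry the local coefficient factor explicitly on both sides or state that the constants may also depend on the local coefficient scale.
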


\begin{proof}
  By the definition of the strength of connection, we have
    \[
        a_{ii} =\sum_{k\in N_k}-a_{ik} \le -\frac{|N_i|}{\theta}a_{ij},
    \]
    Since $A$ is symmetric, we also have
    \[
        a_{ii}\le -\frac{|N_i|}{\theta}a_{ji},
    \]
    By the definition of $\Omega_j$ in standard interpolation, for any $i\in \Omega_j\setminus\{j\}$, either $i\in F_j^s$ or there exists a $i\in F_j^s$ such $i\in F_{k}^s$. For the latter, $(j, k, i)$ forms a path between $j$ and $i$ going along strong connections. We have then 
    \[
        -a_{ik} \ge  -\frac{\theta}{|N_k|} a_{kk} \ge -\frac{\theta}{|N_k|} a_{kj}\ge -\frac{\theta^2}{|N_k||N_j|} a_{jj}.
    \]
    and 
    \[
        a_{jj}\ge -a_{jk} \ge \frac{|N_k|}{\theta}a_{kk} \ge -\left(\frac{|N_k|}{\theta}\right)^2a_{ik} 
    \]
    Combining the above two inequalities and using the assumption that the mesh is shape regular, for any $l\in \Omega_j$ that is connected with $i$ we have 
    \[
        \sigma_1 a_{ij} \le -a_{il} \le \sigma_2 a_{jj}
        \]
    with constants $\sigma_1$ and $\sigma_2$ which depend on the shape regularity constant and $\theta$.

    Since in the definition of $A_j$ in \eqref{Aj}, $\omega_e=-a_{ij}/2$ for $e=(i, j)$ , we obtain
\begin{equation}
    c_1 a_{jj}\langle A_{L,j} v_j, v_j\rangle \le ( A_jv_j, v_j) \le c_2 a_{jj}\langle A_{L,j} v_j, v_j\rangle.
\end{equation}
    Then by a scaling argument, $a_{jj}\eqqsim h^{d-2}$ and the proof is complete. 
\end{proof}

\begin{theorem}\label{thm:two-level-jump}
  The two level method using a coarse space defined as $V_c$ defined via the classical AMG
  is uniformly convergent.
\end{theorem}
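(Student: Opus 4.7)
The plan is to invoke Theorem~\ref{thm:two-level-conv}, which reduces proving uniform convergence to showing that Assumption~\ref{a:2level} holds with constants $C_{p,1}$, $C_{p,2}$, $\mu_c^{-1}$, and $c^D$ all bounded independently of the mesh size $h$ and the jump parameter $\epsilon$. By the results of \S\ref{sec:m-matrix} and \S\ref{sec:m-matrix-fem}, together with Corollary~\ref{corollary-diag}, we may work throughout with the $M$-matrix relative $A_S$ of the finite element stiffness matrix, since the passage from $A$ to $A_S$ costs only constants depending on shape regularity and the strength threshold $\theta$.

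First I would verify the three parts of Assumption~\ref{a:2level}. Conditions (1) and (2) follow directly from Lemma~\ref{lem:Dj} and the partition-of-unity lemma preceding it, both yielding the overlap constant $C_{p,1}=C_{p,2}=\co$; under standard interpolation each patch $\Omega_j$ intersects only a shape-regularly bounded number of other patches, so $\co$ is uniform in $h$ and $\epsilon$. Condition (3) is immediate: by construction $A_j$ is the graph Laplacian of the connected subgraph on $\Omega_j$, hence $N(A_j)=\operatorname{span}\{\bm{1}_{n_j}\}=V_j^c$.

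The heart of the argument is the uniform lower bound on $\mu_c=\min_j\lambda_j^{(2)}$. Since $V_j^c=\operatorname{span}\{\bm{1}_{n_j}\}$ coincides with $N(A_j)$, the quantity $\mu_j(V_j^c)^{-1}$ is the best constant in the local discrete Poincar\'e inequality
\[
    \min_{c\in\mathbb{R}}\|v_j-c\bm{1}_{n_j}\|_{D_j}^2 \;\le\; \mu_j(V_j^c)^{-1}\,\|v_j\|_{A_j}^2, \quad v_j\in V_j.
\]
I would chain two equivalences. Lemma~\ref{lem:shape_reg} gives $(A_jv_j,v_j)\eqqsim h^{d-2}\langle A_{L,j}v_j,v_j\rangle$ with constants depending only on $\sigma$ and $\theta$. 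A matching scaling must be established for the diagonal: I would show $(D_j)_{ii}\eqqsim h^{d-2}a^{(j)}$ for a single effective coefficient $a^{(j)}$ on $\Omega_j$, after which Lemma~\ref{lem:discrete_poincare}, applied to the graph $\mathcal{G}_j$ whose cardinality $n_j$ and diameter are bounded in terms of shape regularity alone, delivers the required uniform bound $\lambda_j^{(2)}\ge\lambda_\star>0$.

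The main obstacle will be justifying the diagonal scaling $D_j\eqqsim h^{d-2}a^{(j)}I_{n_j}$ in the presence of the discontinuous coefficient \eqref{coeff12}. This is precisely where the strength-of-connection filter built into $A_S$ pays off: edges crossing the coefficient interface are weak (their weights scale with $\epsilon$, hence fall below the threshold $\theta$), so the strong-connection patches $\Omega_j=\{j\}\cup F_j^s$ used in the standard interpolation are, up to shape-regularly bounded boundary layers, contained in a single subregion $\Omega_1$ or $\Omega_2$. Consequently every diagonal entry of $D_j$ carries the same coefficient factor up to a ratio depending only on $\theta$ and $\sigma$, and the jump parameter $\epsilon$ cancels cleanly in the generalized eigenproblem $A_jv=\lambda D_jv$. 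Combining this with the overlap estimates above and the equivalence \eqref{star-equiv-norms} applied to a standard Gauss--Seidel or Jacobi smoother (for which $c^D$ is shape-regularity dependent only), Theorem~\ref{thm:two-level-conv} yields a convergence rate bounded away from $1$ independently of $h$ and $\epsilon$.
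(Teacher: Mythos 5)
Your overall skeleton is the paper's: pass to the $M$-matrix relative, verify Assumption~\ref{a:2level} with $C_{p,1}=C_{p,2}=\co$ via the partition-of-unity lemma and Lemma~\ref{lem:Dj}, and reduce everything to a uniform lower bound on $\mu_c$ obtained from Lemma~\ref{lem:shape_reg} and Lemma~\ref{lem:discrete_poincare}, transferred back through Lemma~\ref{lemma-equiv} and Theorem~\ref{thm:two-level-conv}. The gap sits exactly in the step you flag as the ``main obstacle'': your justification of $D_j\eqqsim h^{d-2}a^{(j)}I_{n_j}$ rests on the claim that edges crossing the coefficient interface are weak because their weights scale with $\epsilon$ and hence fall below the threshold $\theta$. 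That inference is not valid: the strength measure is relative, and with the paper's symmetrized definition $s_c(i,j)=a_{ij}/\max\big(\min_k a_{ik},\min_k a_{jk}\big)$ an edge joining a node interior to the $\epsilon$-region to a node on the interface is normalized by the smaller row extremum, which itself scales like $\epsilon h^{d-2}$, so $s_c(i,j)\approx 1>\theta$ and the edge survives the filter. Consequently the strong-connection patches $\Omega_j$ can straddle the interface, and the geometric localization (``each patch lies in a single subregion up to a bounded boundary layer'') on which your diagonal-scaling argument relies is not available; as stated, the derivation of the single effective coefficient $a^{(j)}$ does not go through.

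The conclusion you need is nevertheless obtainable, but algebraically rather than geometrically, and this is what the paper's Lemma~\ref{lem:shape_reg} already encodes: the strength-of-connection inequalities used in its proof (e.g. $a_{ii}\le -\frac{|N_i|}{\theta}a_{ij}$ and $-a_{il}\le\sigma_2 a_{jj}$) show that \emph{all} off-diagonal weights and all diagonal entries attached to a patch are comparable to $a_{jj}$, with constants depending only on $\theta$ and the shape regularity, irrespective of where the interface cuts the patch; the common factor (which for the jump problem is $\bar a_j h^{d-2}$ rather than bare $h^{d-2}$) then cancels in the generalized eigenproblem $A_jv=\lambda D_jv$, and Lemma~\ref{lem:discrete_poincare}, with $n_j$ and the graph diameter bounded by shape regularity, gives $\lambda_j^{(2)}\ge\lambda_\star>0$ uniformly in $h$ and $\epsilon$. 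If you replace your interface-weakness argument by this within-patch comparability argument---i.e., invoke the inequalities from the proof of Lemma~\ref{lem:shape_reg} for the diagonal entries as well as for the edge weights---the remainder of your proposal coincides with the paper's proof.
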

\begin{proof}
    By Theorem~\ref{thm:two-level-convergence}, we only need to show that $\mu_c$ is bounded, which can be easily obtained by combining Lemma~\ref{lem:discrete_poincare} -- \ref{lem:shape_reg} with Lemma~\ref{lemma-equiv}.
\end{proof}

We point out that Theorem~\ref{thm:two-level-jump} is also true for two level unsmoothed aggregation AMG. The proof is identical to the proof for classical AMG case.

\section*{Acknowledgments}
The work of Xu was partially supported by the DOE Grant
DE-SC0009249 as part of the Collaboratory on Mathematics for
Mesoscopic Modeling of Materials and by NSF grants DMS-1412005
and DMS-1522615.  The work of Zikatanov was partially supported by
by NSF grants DMS-1418843 and DMS-1522615.

\bibliographystyle{unsrt}
\bibliography{refs}

\end{document}